\newtheorem{teo}{Theorem}
\newtheorem{pro}{Proposition}
\newtheorem{lem}{Lemma}
\newtheorem{cor}{Corollary}
\newtheorem*{rem}{Remark}
\title{Spectral properties of the free Jacobi process associated with equal rank projections}
\author{Nizar Demni}
\address{Department of Mathematics, New York University in Abu Dhabi, Saadiyat Island, P.O. Box 129188, Abu Dhabi, UAE}
\email{nd2889@nyu.edu}
\author[T. Hamdi]{Tarek Hamdi}
\address{Department of Management Information Systems  \\ College of Business and Economics\\ Qassim University  \\ Saudi Arabia
and Laboratoire d'Analyse Math\'ematiques et applications LR11ES11 \\ Universit\'e de Tunis El-Manar \\ Tunisie}
\email{ t.hamdi@qu.edu.sa } 
\keywords{Wachter distribution; Compression by a free projection; Free unitary Brownian motion; Free Jacobi process; Partial differential equation; Cauchy-Stieltjes transform; Saddle point method; Nica-Speicher semi-group.} 
\begin{document}
\maketitle
\begin{abstract} 
The free Jacobi process is the radial part of the compression of the free unitary Brownian motion by two free orthogonal projections in  a non commutative probability space. In this paper, we derive spectral properties of the free Jacobi process associated with projections having the same rank $\alpha \in (0,1)$. To start with, we determine the characteristic curves of the partial differential equation satisfied by the moment generating function of its spectral distribution. 
Doing so leads for any fixed time $t >0$ to an expression of this function in a neighborhood of the origin, therefore extends our previous results valid for $\alpha = 1/2$. 
Moreover, the obtained characteristic curves are encoded by an $\alpha$-deformation of the compositional inverse of the $\chi$-transform of the spectral distribution of the free unitary Brownian motion.
In this respect, we study mapping properties of this deformation and use the saddle point method to prove that the compositional inverse of a $\alpha$-deformation of the $\chi$-transform of the free unitary Brownian motion is analytic in the open unit disc (for large enough time $t$).
The last part of the paper is devoted to a dynamical version of a recent identity pointed out by T. Kunisky in \cite{Kun}. Actually, this identity relates the stationary distributions of the free Jacobi processes corresponding to the sets of parameters 
$(\alpha, \alpha)$ and $(1/2,\alpha)$ respectively and we explain how it follows from the Nica-Speicher semi-group. Our dynamical version then relates the partial differential equations of the Cauchy-Stieltjes transforms of the densities of 
the finite-time spectral distributions. It also raises the problem of whether a dynamical analogue of the Nica-Speicher semi-group exists when the compressing projection has rank $1/2$.
\end{abstract}


\section{Introduction and main results}
\subsection{Random matrices, matrix-valued stochastic processes and free probability theory} 
Free probability theory provides an operator-algebraic framework for large-size random matrices. It describes among other things the large-size weak limits of spectral distributions of independent sums and/or products of normal random matrices. 
As far as selfadjoint and unitary random matrices are considered, these limits are encoded in the so-called additive and multiplicative free convolutions on the real line and on the unit circle respectively. In this respect, 
complex analysis and combinatorial techniques open the way to compute their various transforms such as the Cauchy-Stieltjes and Herglotz transforms, or their compositional inverses referred to as R and S transforms, just to cite a few. 
We refer the reader to the monograph \cite{Nic-Spe} for a good background on free probability theory and to \cite{Min-Spe} for further related and advanced topics.   

Among the widely studied random matrix models figure independent Gaussian Hermitian and Haar unitary matrices. In the large size limit, they behave  as free semi-circular and free Haar unitary operators respectively in a $II_1$-type von Neumann algebra,
and their corresponding spectral distributions are given by the celebrated Wigner distribution and the Haar measure on the unit circle. An older and well-studied too selfadjoint model is given by complex Gaussian covariance or (known also as complex Wishart) matrices. When suitably rescaled, their spectral distributions converge weakly to the Marchencko-Pastur distribution. By considering the symmetrized ratio of two independent complex Wishart matrices, one obtains the complex matrix-variate Beta distribution which belongs to the so-called unitary Jacobi ensemble. In this respect, it was shown in \cite{Cap-Cas} that the corresponding spectral distribution converges weakly, when the ranks and the sizes of the underlying Wishart matrices have comparable growths, 
to the so-called Wachter distribution which already appeared in \cite{Wac}. Later on, another realization of the matrix-variate Beta distribution was afforded in \cite{Col} through radial parts of corners of Haar unitary matrices, which extends to higher dimensions the construction of the one dimensional Beta distribution from uniformly distributed vectors on Euclidean spheres. 

The study of matrix-valued stochastic processes originates in the work of F. Dyson (\cite{Dys}) and was motivated by interacting particle systems on the real line and on the circle. Indeed, the former consists of the eigenvalues of the Brownian motion in the space of Hermitian matrices while the latter consists of the eigenvalues of the Brownian motion in the unitary group. Motivated by principal component analysis, M. F. Bru introduced decades later the so-called Wishart processes whose complex Hermitian analogue was studied in \cite{DemLag}. The eigenvalues of this model then provide an interacting particle system on the positive half-line. However, the construction of the complex matrix-variate Beta distribution as a symmetrized ratio of two independent complex Wishart matrices does not extend to the dynamical setting. This is not even true for one dimensional Jacobi processes since the ratio of two independent Bessel processes is a time-changed Jacobi process (\cite{War-Yor}). 
Rather, the dynamical analogue of the Beta matrix model, known as the Hermitian Jacobi process, is defined as the radial part of an upper-left corner of a unitary Brownian matrix. Its eigenvalues form interacting particles in the interval $[0,1]$ 
(see \cite{Del-Dem} and references therein for further details). 

Coming to large-size limits of the aforementioned matrix-valued stochastic processes, the non commutative moments of the Hermitian Brownian motion converges to those of the free additive Brownian motion (\cite{BCG}). When starting at the zero operator, the spectral distribution of the latter is a time-rescaled Wigner distribution since the Hermitian Brownian motion at any time $t > 0$ reduces in this case to a Hermitian Gaussian matrix of variance $t$. Similar results hold true for the complex Wishart process: 
it converges (in the sense of non commutative moments) to the free Wishart process and when starting at zero, the spectral distribution of this self-adjoint operator at any time $t > 0$ is a time-rescaled Marchenko-Pastur distribution (\cite{Cap-Don}).   

As to the unitary Brownian motion and to the Hermitian Jacobi process, they converge in the large size limit  to the free unitary Brownian motion and to the free Jacobi process respectively (\cite{Bia}, \cite{Dem}). However, the uniform measure on the circle and the Wachter distribution only show up in the stationary regime $t \rightarrow +\infty$, while the spectral distributions of these unitary and Hermitian operators at any finite time $t > 0$ are completely different even when starting at the identity operator. Actually, the moments of the free unitary Brownian motion were derived in \cite{Bia} from the asymptotic analysis of the moments of the unitary Brownian motion and are expressed through Laguerre polynomials. 
As to those of free Jacobi process, they were recently obtained  in \cite{Dem-Ham1} after a careful asymptotic analysis of the moment formula derived in \cite{DHS} for the Hermitian Jacobi process.

Let us close this introductory part by recalling that the second named author thoroughly studied the dynamics of the spectral distribution of the free Jacobi process relying on the product of two orthogonal symmetries, one of which is rotated by a free unitary Brownian motion (\cite{Ham0}, \cite{Ham1}). This study revealed a close connection to radial L\"owner equations and led to a more or less explicit decomposition of the spectral distribution of the free Jacobi process at any time $t > 0$. In particular, this probability measure was shown to have only an atomic and an absolutely-continuous parts. 

\subsection{Contribution and main results}
Let $(\mathscr{A}, \tau)$ be a non commutative probability space endowed with a trace $\tau$ and with a unit ${\bf 1}$. Consider two orthogonal projections $P$ and $Q$ in $\mathscr{A}$ with ranks 
\begin{equation*}
\tau(P) = \beta, \quad \tau(Q) = \alpha, \quad \alpha, \beta  \in (0,1),
\end{equation*}
and let $(Y_t)_{t \geq 0}$ be a free unitary Brownian motion in $\mathscr{A}$. Assume $(Y_t)_{t \geq 0}$ is $\star$-free with $\{P, Q\}$ (\cite{Bia}), then the free Jacobi process $(J_t^{(\beta, \alpha)})_{t \geq 0}$ is defined for any time $t \geq 0$ by 
\begin{equation*}
J_t^{(\beta, \alpha)} := PY_tQY_t^{\star}P
\end{equation*}
viewed as a self-adjoint operator in the compressed algebra $P\mathscr{A}P$ endowed with the normalized trace $\tau/\tau(P)$. For the reader's convenience, let us point out that the ranks $(\beta, \alpha)$ correspond to $(\lambda \theta, \theta)$ in 
the notations used in \cite{Dem}, \cite{Del-Dem}, \cite{DHS} and \cite{Dem-Ham1}. This change of notations is only motivated by the connection to \cite{Kun} we explain later and we hope it will not bring any confusion.  

In this paper, we mainly focus on the case $\alpha = \beta$. At the matrix level, this couple of parameters corresponds to a Hermitian Jacobi process built out of an upper-left corner of a unitary Brownian motion whose shape parameters are 
asymptotically equal in the large size-limit (\cite{Del-Dem}). Now, let $\mu_t^{(\beta,\alpha)}$ denote the spectral distribution of $J_t^{(\beta,\alpha)}$ in the compressed probability space and denote further 
\begin{equation*}
M_t^{(\alpha)}(z):= \sum_{n \geq 0}m_{n,t}^{(\alpha)}z^n, \quad m_{n,t}^{(\alpha)}:= \int x^n \mu_t^{(\alpha,\alpha)}(dx),
\end{equation*}
the moment generating function of $\mu_t^{(\alpha,\alpha)}$. Since $0 < m_{n,t}^{(\alpha)} \leq 1$ then $M_t^{(\alpha)}$ is an analytic function in the open unit disc $\mathbb{D}$. Letting 
\begin{equation*}
G_t^{(\alpha)}(z) = \frac{1}{z}M_t^{(\alpha)}\left(\frac{1}{z}\right), \quad z \in \mathbb{C} \setminus \mathbb{D},
\end{equation*}
be the Cauchy-Stieltjes transform of $\mu_t^{(\alpha,\alpha)}$, then $G_t^{(\alpha)}$ extends to an analytic map to $\mathbb{C} \setminus [0,1]$ through the integral formula: 
\begin{equation*}
G_t^{(\alpha)}(z) = \int \frac{1}{z-x} \mu_t^{(\alpha,\alpha)}(dx).
\end{equation*}
 Besides, Proposition 7.1. in \cite{Dem} shows that it satisfies a transport-type partial differential equation (pde), and in turn so does $(t,z) \mapsto M_t^{(\alpha)}(z)$.
The analysis of the characteristic curves of this pde leads to our first main result stated in the following theorem:

\begin{teo}\label{Th1}
For any $\alpha \in (0,1)$, set 
\begin{equation*}
A(\alpha): = \frac{1-2\alpha}{2\alpha}
\end{equation*}
and assume $\mu_0^{(\alpha,\alpha)} = \delta_1$. Then, for any time $t > 0$, there exists a map $J_{4\alpha t}^{(\alpha)}$ locally around the origin such that the moment generating function $M_t^{(\alpha)}$ is given by:
\begin{equation*}
M_t^{(\alpha)}(z) = \frac{-A(\alpha) +\sqrt{[A(\alpha)]^2z + (1-z)[J_{4\alpha t}^{(\alpha)}(z)]^2}}{1-z}.
\end{equation*}
Equivalently,
\begin{equation*}
M_t^{(\alpha)}(z) =J_{4\alpha t}^{(\alpha)}(z) \frac{1+\psi_t^{(\alpha)}(z)}{1-\psi_t^{(\alpha)}(z)} -A(\alpha),
\end{equation*}
where  
\begin{align*}
\psi_t^{(\alpha)}(z) :=   \frac{\left(J_{4\alpha t}^{(\alpha)}(z) - A(\alpha)-1\right)\left(J_{4\alpha t}^{(\alpha)}(z) - A(\alpha)\right)}{\left(J_{4\alpha t}^{(\alpha)}(z) + A(\alpha)+1\right)\left(J_{4\alpha t}^{(\alpha)}(z) + A(\alpha)\right)}e^{2\alpha  t J_{4\alpha t}^{(\alpha)}(z)}.
\end{align*}
\end{teo}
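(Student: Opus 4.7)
The plan is to apply the method of characteristics to the first-order PDE satisfied by $M_t^{(\alpha)}$. I would first translate Proposition 7.1 of \cite{Dem}, which gives a transport-type PDE for $G_t^{(\alpha)}$, into a quasi-linear PDE for the moment generating function via the relation $G_t^{(\alpha)}(z) = z^{-1} M_t^{(\alpha)}(z^{-1})$. The resulting PDE has the form $\partial_t M = \mathcal{A}(z, M) \partial_z M + \mathcal{B}(z, M)$ on a neighborhood of the origin in $\mathbb{D}$, with characteristic system $\dot z = -\mathcal{A}(z, m)$, $\dot m = \mathcal{B}(z, m)$. The initial condition $\mu_0^{(\alpha,\alpha)} = \delta_1$ furnishes $M_0^{(\alpha)}(z) = 1/(1-z)$, pinning down the starting values of $m$ along each characteristic.

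Next, guided by the target formula, I would introduce the candidate first integral $J^2 := (A(\alpha) + m)^2 - z m^2$ and verify by direct differentiation along the characteristic system that $J$ is conserved. Once this is established, the defining quadratic $(1-z) m^2 + 2 A(\alpha) m + A(\alpha)^2 = J^2$ can be solved for $m$, and selecting the branch that agrees with $M_0^{(\alpha)}(z) = 1/(1-z)$ at $t = 0$ yields the quadratic-formula version of the theorem, with $J_{4\alpha t}^{(\alpha)}(z)$ playing the role of the label of the characteristic ending at $(t, z)$.

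Then, to produce the $\psi_t^{(\alpha)}$-representation, I would perform the second characteristic quadrature. Using the partial fraction expansion of $dt/dm$ (equivalently of $\dot z/\dot m$) built from the roots $m_\pm$ of the above quadratic, which involve the combinations $J \pm A(\alpha)$ and $J \pm (A(\alpha)+1)$, the integration produces a logarithmic primitive whose exponentiation recovers the rational prefactor $\frac{(J - A(\alpha) - 1)(J - A(\alpha))}{(J + A(\alpha) + 1)(J + A(\alpha))}$ of $\psi_t^{(\alpha)}$. The explicit time-dependent factor $e^{2\alpha t J}$, together with the scaling $4\alpha t$ in the argument of $J$, should emerge from the explicit coefficients of $\mathcal{A}$ and $\mathcal{B}$, which ultimately trace back to the Biane equation for the spectral distribution of the free unitary Brownian motion. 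The initial condition fixes the constant of integration and thereby determines $J_{4\alpha t}^{(\alpha)}$ implicitly; its local existence and analyticity near the origin then follow from the implicit function theorem applied to the characteristic map at $z = 0$.

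Finally, the equivalence of the two stated forms is a purely algebraic check: from $M + A(\alpha) = J(1+\psi)/(1-\psi)$ one squares, clears denominators, and uses the definition of $\psi_t^{(\alpha)}$ in terms of $J$ to recover $(A(\alpha) + M)^2 - z M^2 = J^2$, which rearranges into the quadratic formula after choosing the correct branch. The main obstacle I anticipate is identifying the correct first integral $J$ in the first place and executing the quadrature so that the rational prefactor and the exponential factor match the stated $\psi_t^{(\alpha)}$ exactly; once that bookkeeping is in place, the remaining steps are routine.
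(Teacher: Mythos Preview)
Your plan is correct and follows essentially the same route as the paper: translate the PDE to $M_t^{(\alpha)}$, run characteristics, identify a Riccati equation/first integral, integrate it, and apply local inversion. The only difference is in how the conserved quantity is found: the paper \emph{derives} the Riccati equation by introducing the auxiliary primitive $y_t^{(\alpha)}=\int_0^t z_s^{(\alpha)}\,ds$, reducing to an autonomous second-order ODE, and integrating once; you instead propose to \emph{guess} the invariant $J^2=(A(\alpha)+m)^2-zm^2$ and verify it along characteristics. Both lead to the same object (the paper's $B(1-z_0,\alpha)$ is exactly your $J^2$).

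One point where your sketch is slightly off: the factored rational prefactor $\dfrac{(J-A-1)(J-A)}{(J+A+1)(J+A)}$ in $\psi_t^{(\alpha)}$ does \emph{not} come from partial fractions against the roots of the quadratic $(1-z)m^2+2Am+(A^2-J^2)=0$. The partial-fraction integration of the Riccati equation only produces the constant $\dfrac{f_0+A-J}{f_0+A+J}$; the factored form arises when you eliminate $z_0$ using the specific initial condition $f_0=1/(1-z_0)$ together with $J^2=A^2+\dfrac{1-\alpha}{\alpha(1-z_0)}$, which yields the algebraic identity $\dfrac{f_0+A-J}{f_0+A+J}=\dfrac{(J-A)(J-A-1)}{(J+A)(J+A+1)}$. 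Likewise, the ``equivalence'' of the two stated forms is not a freestanding algebraic identity but uses the implicit equation defining $J_{4\alpha t}^{(\alpha)}$ (the paper's relation \eqref{Inversion} between $z_t^{(\alpha)}$, $u=J$, and $V_{4\alpha t}^{(\alpha)}(u)$). You correctly flag this bookkeeping as the main obstacle; once you track it through, the rest is as you describe.
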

The map $J_{4\alpha t}^{(\alpha)}$ satisfies $J_{4\alpha t}^{(\alpha)}(0) = 1/(2\alpha)$ and arises as the local inverse in a neighborhood of $1/(2\alpha)$ of a $\alpha$-deformation of 
\begin{equation*}
u \mapsto \frac{4\xi_{2t}(u)}{(1+\xi_{2t}(u))^2} 
\end{equation*}
where 
\begin{equation*}
\xi_{2t}(u) := \frac{u-1}{u+1}e^{tu}
\end{equation*}
is the inverse of the Herglotz transform $H_{2t}$ of the spectral distribution of the free unitary Brownian motion at time $2t$ (\cite{Bia1}). In particular, if $\alpha = 1/2$ then $A(\alpha) = 0$ and 
\begin{equation*}
J_{2t}^{(1/2)}(z) = H_{2t}(\psi(z)), \quad \psi(z) :=\psi_t^{(1/2)}(z)= \frac{1-\sqrt{1-z}}{1+\sqrt{1-z}},
\end{equation*}
so that 
\begin{equation*}
M_t^{(1/2)}(z) = \frac{H_{2t}(\psi(z))}{\sqrt{1-z}}.
\end{equation*}
Since both sides of this equality are analytic in the open unit disc then it holds true there and as such one recovers Proposition 2 in \cite{DHH}. Even more, since $\psi$ is a analytic one-to-one map from the cup plane $\mathbb{C}\setminus [1,+\infty[$ 
onto $\mathbb{D}$ then the right-hand side gives an analytic extension of $M_t^{(1/2)}(z)$ to $\mathbb{C}\setminus [1,+\infty[$. On the other hand, it is worth noting that the proof of Proposition 2 in \cite{DHH} did not rely on the analysis of the characteristic 
curves of the corresponding pde, so that Theorem \ref{Th1} supplies another proof of it. 

Our analysis gives rise also to the following map: 
\begin{align*}
u \mapsto V_{4\alpha t}^{(\alpha)}(u) :=   \frac{\left(u - A(\alpha)-1\right)\left(u - A(\alpha)\right)}{\left(u + A(\alpha)+1\right)\left(u + A(\alpha)\right)}e^{2\alpha u t}
\end{align*}
which reduces to $\xi_{2t}$ when $\alpha = 1/2$ and $u \neq 0$. Recall from \cite{Bia1} that $\xi_{2t}$ is a analytic one-to-one map from a Jordan domain $\Gamma_{2t} \subset \{\Re(u) > 0\}$ onto $\mathbb{D}$ and that it exhibits a phase transition at $t=2$ illustrated by the fact that the closure $\overline{\Gamma_{2t}}$ intersects the imaginary axis when $t < 2$ while it lies totally inside the right half plane when $t \geq 2$. Consequently, the support of the spectral distribution of $Y_{2t}$ fits the whole unit circle 
only when $t \geq 4$. This phase transition carries over to the moments of $Y_{2t}$: they admit an exponential decay when $t > 2$ and a polynomial one when $t \leq 2$ (\cite{Gro-Mat}). 

It is then natural to check whether these properties extend to any $\alpha \in (0,1)$. Of course, one expects the failure of these properties for small times and small common rank $\alpha$ of both projections since they degenerate as $\alpha \rightarrow 0^+$. 
Nonetheless, since $Y_t$ approaches a Haar unitary operator when 
$t$ becomes large then we also expect a regularizing effect for large times even for smaller ranks. In this respect, we shall prove the following mapping properties on the positive half-line: 
\begin{itemize}
\item If $\alpha > 1/2$ then for any time $t > 0$, the rescaled map $u \mapsto V_{4\alpha t}^{(\alpha)}(u/(2\alpha))$ extends to a one-to-one map from an open interval $I_{t,\alpha} \subset (0,\infty)$ onto $(-1,1)$. 
\item If $\alpha < 1/2$ then for any time $t \in [0,2]$, the image of $(0,+\infty)$ by $u \mapsto V_{4\alpha t}^{(\alpha)}(u/(2\alpha))$ is a proper subset of $(-1,1)$. 
\end{itemize}  

The restriction to real arguments of $V_{4\alpha t}^{(\alpha)}$ is by no means a loss of generality. It is rather a sake of simplicity since the analysis of $V_{4\alpha t}^{(\alpha)}$ on the positive half line is already quite technical. 
Nonetheless, it shows phase transitions at two times $t_0(\alpha) \leq 2 \leq t_1(\alpha)$ provided that $\alpha > 1/2$ which collapse to $t=2$ when $\alpha = 1/2$. As to the decay of the moments of the local inverse (around the origin) of 
$V_{4\alpha t}^{(\alpha)}, \alpha > 1/2$, the saddle point analysis of its Taylor coefficients shows that there are four critical points as opposed to the two critical points corresponding to $\alpha = 1/2$ 
(see \cite{Gro-Mat} for the saddle point analysis in the latter case).
Moreover, an additional singularity at $u = -\alpha$ arises in the Cauchy integral representation of these Taylor coefficients when $\alpha > 1/2$ which eliminates the real critical points of the integrand lying outside the interval $(-\alpha, 0)$.  

The last result we prove in this paper is motivated by an identity due to T. Kunisky (\cite{Kun}, Proposition C.2.) stating that the normalized density of the spectral distribution of $\mu_{\infty}^{(\alpha,\alpha)}$ and the pushforward of the normalized density of 
$\mu_{\infty}^{(1/2,\alpha)}$ under the map $x \mapsto (2x-1)^2$ coincide. Here, we denoted $\mu_{\infty}^{(\beta, \alpha)}$ the Wachter distribution of parameters $(\beta, \alpha)$ which arises as the weak limit  of $\mu_{t}^{(\beta, \alpha)}$ as $t \rightarrow +\infty$. Though this identity may be readily checked from the explicit expressions of the densities of $\mu_{\infty}^{(\alpha,\alpha)}$ and of $\mu_{\infty}^{(1/2,\alpha)}$, we show that it is reminiscent of special instance of the  Nica-Speicher semi-group 
(\cite{Nic-Spe}): the compression of any operator by a free projection of rank $1/2$ is equivalent in distribution to the half-sum of two free copies of the given operator. 
Afterwards, we prove a dynamical version of Kunisky's identity which reads as follows: 
\begin{pro}\label{Pr1}
If $\tau(P) = 1/2$ and if the angle operators 
\begin{equation*}
PQP, \quad ({\bf 1}-P)Q({\bf 1}-P),
\end{equation*}
have the same spectral distribution in $(P\mathscr{A}P, 2\tau)$, then the Cauchy-Stieltjes transforms of the normalized density of $\mu_t^{(\alpha,\alpha)}$ and of the pushforward of the normalized density of $\mu_{t/2}^{(1/2,\alpha)}$ under the map 
$x \mapsto (2x-1)^2$ satisfy the same pde. 
\end{pro}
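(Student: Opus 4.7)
The plan is to derive the PDE for the Cauchy-Stieltjes transform of the pushforward from the PDE satisfied by the Cauchy-Stieltjes transform of $\mu_t^{(1/2,\alpha)}$ (Proposition 7.1 of \cite{Dem}) via an explicit change of variables, and then match it to the PDE for $\mu_t^{(\alpha,\alpha)}$. The crucial input is a reflection symmetry about $1/2$ of the normalized ac density of $\mu_t^{(1/2,\alpha)}$, which is the dynamical consequence of the hypothesis.

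I would first extract this symmetry. The standard tracial identity relating complementary angle operators ensures that the absolutely continuous parts of $\mu_{(\mathbf{1}-P)Q(\mathbf{1}-P)}$ and of $\mu_{P(\mathbf{1}-Q)P}$ coincide (both being distributed as $\sin^2\theta_i$, where the $\theta_i$ are the angles between the ranges of $P$ and $Q$). Combined with the hypothesis, this forces the ac part of $\mu_0^{(1/2,\alpha)} = \mu_{PQP}$ to be invariant under $x \mapsto 1-x$, since $P(\mathbf{1}-Q)P = P - PQP$ inside $(P\mathscr{A}P, 2\tau)$. The unitarity of $Y_t$ yields the operator identity $P - PY_tQY_t^{\star}P = PY_t(\mathbf{1}-Q)Y_t^{\star}P$, and the $\star$-freeness of $Y_t$ from $\{P,Q\}$ then propagates this symmetry to every time $t \geq 0$.

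With this in hand, denote by $H_s$ the Cauchy-Stieltjes transform of the normalized ac density of $\mu_s^{(1/2,\alpha)}$. A direct partial-fraction computation, using the reflection symmetry, gives the closed form
\begin{equation*}
K_t(z) \;=\; \frac{1}{2\sqrt{z}}\, H_{t/2}\!\left(\frac{1+\sqrt{z}}{2}\right), \qquad z \in \mathbb{C}\setminus [0,1],
\end{equation*}
for the Cauchy-Stieltjes transform $K_t$ of the pushforward of the normalized ac density of $\mu_{t/2}^{(1/2,\alpha)}$ under $x \mapsto (2x-1)^2$ (with the principal branch of $\sqrt{z}$). Differentiating this identity in $t$ and $z$, using $dw/dz = 1/(4\sqrt{z})$ at $w = (1+\sqrt{z})/2$ and $\partial_t H_{t/2} = (1/2)(\partial_s H_s)|_{s=t/2}$, converts the PDE satisfied by $H_s$ into a PDE for $K_t$. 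The halved time variable and the prefactor $2\sqrt{z}$ jointly absorb the Jacobian of the quadratic substitution, and one then checks that the resulting PDE coincides with the PDE satisfied by the Cauchy-Stieltjes transform of the normalized ac density of $\mu_t^{(\alpha,\alpha)}$.

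The main obstacle is this last coefficient-matching: the transport coefficients of the PDE in Proposition 7.1 of \cite{Dem} are polynomial in $\beta, z$ and nonlinear in the unknown, so one has to verify that the $\beta = 1/2$ coefficients at $w = (1+\sqrt{z})/2$, weighted by $2\sqrt{z}$ and with halved time, reproduce exactly the $\beta = \alpha$ coefficients at $z$. A secondary delicate point is the propagation of the reflection symmetry; while the $\star$-freeness argument is natural, an equivalent route is to verify directly the $z \mapsto 1-z$ invariance of the PDE in the case $\beta = 1/2$, a symmetry reminiscent of the Nica--Speicher compression by a free projection of rank $1/2$ discussed in the introduction.
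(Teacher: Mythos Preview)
Your two-step plan is the paper's plan: (i) establish that the normalized density of $\mu_t^{(1/2,\alpha)}$ is symmetric under $x\mapsto 1-x$, then (ii) push the PDE from \cite{Dem} through the quadratic change of variables and match coefficients with the $(\alpha,\alpha)$ PDE. Step (ii) is carried out in the paper exactly as you describe, only split into two substitutions ($x\mapsto 2x-1$ then $x\mapsto x^2$) rather than your single formula $K_t(z)=\tfrac{1}{2\sqrt z}\,H_{t/2}\bigl(\tfrac{1+\sqrt z}{2}\bigr)$; the coefficient check is routine either way.

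The real difference is in step (i). The paper does not argue by freeness propagation. It passes to the symmetries $R=2P-\mathbf 1$, $S=2Q-\mathbf 1$, shows the hypothesis is equivalent to the vanishing of the odd moments of $RS$, then invokes Theorem~1.1 of \cite{Ham0} (density of $\mu_t^{(1/2,\alpha)}$ expressed through the spectral density $\kappa_t$ of $RY_tSY_t^\star$) together with Proposition~2.1 of \cite{Ham1} (odd moments of $RY_tSY_t^\star$ vanish whenever those of $RS$ do) to get $\kappa_t(-w)=\kappa_t(w)$, whence the $x\mapsto 1-x$ symmetry. Your route is more self-contained, but as written the propagation sentence is not quite right: the identity $P-J_t=PY_t(\mathbf 1-Q)Y_t^\star P$ invites replacing $Q$ by $\mathbf 1-Q$, yet the hypothesis does \emph{not} give $(P,Q)\sim(P,\mathbf 1-Q)$ in joint law. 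What it does give, since $\tau(P)=\tau(\mathbf 1-P)=1/2$ and the joint law of two projections is determined by the individual traces and the angle-operator distribution, is $(P,Q)\sim(\mathbf 1-P,Q)$; freeness of $Y_t$ then yields $PY_tQY_t^\star P\sim(\mathbf 1-P)Y_tQY_t^\star(\mathbf 1-P)$ for every $t$, and the angle identity applied to the pair $(P,Y_tQY_t^\star)$ closes the argument. Your alternative via the $z\mapsto 1-z$ invariance of the PDE for $H_s$ is correct as well, but note that writing that PDE requires first subtracting the atom of $\mu_t^{(1/2,\alpha)}$ at $x=1$ and knowing its weight $2\alpha-1$ is constant in $t$; the paper takes this from \cite{Ham0}, and your sketch should cite it too.
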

The proof of this proposition goes into two steps. The first one draws the same conclusion under the weaker assumption that the pushforward of the normalized density of $\mu_{t}^{(1/2,\alpha)}$ under the map $x \mapsto 2x-1$ 
is an even function. The second step shows that the assumption made in Proposition \ref{Pr1} on the angle operators ensures the validity of the weak assumption used in the first step.

Finally, we would like to emphasize that satisfying the same pde does not imply that the Cauchy-Stieltjes coincide unless their initial values at $t=0$ match. Nonetheless, we shall prove that if this coincidence holds true, then it implies equality 
between the even moments of $PQP$ in $(P\mathscr{A}P, 2\tau)$ and those of $(Q_1+Q_2)/2$ where $Q_1$ and $Q_2$ are two copies of $Q$. Though the latter hints to the freeness $P$ and $Q$ in 
which case it is an instance of the Nica-Speicher semi-group, we do not whether it holds beyond this setting. Said differently, it would be interesting to exhibit an example of non free orthogonal projections $P$ and $Q$ 
satisfying the assumption on the angle operators made in Proposition \ref{Pr1}. 

The paper is organized as follows. Section 2 is devoted to analysis of the characteristic curves of the pde satisfied by the moment generating function $M_t^{(\alpha)}$ and as such to the proof of Theorem \ref{Th1}. Section 3 is concerned with the mapping properties of the map $V_{4\alpha t}^{(\alpha)}$ and to the saddle point analysis of the Taylor coefficients of its local inverse when $t \geq t_1(\alpha)$. In the last section, we recall the identity pointed out by T. Kunisky and write another proof of it relying on the compression by a free projection and on an algebraic identity we proved in \cite{Dem-Ham}. There, we also prove Proposition \ref{Pr1}.

\section{Analysis of the Characteristic curves: proof of Theorem \ref{Th1}} 
For ease of reading, we split the proof of Theorem \ref{Th1} into three parts. In the first part, we obtain a Ricatti equation for the moment generating function $M_t^{(\alpha)}$ along the characteristic curves. In the second part, we solve locally this equation and derive the explicit expressions of the characteristic curves. In the last part, we use the local inversion Theorem to get the expression of $M_t^{(\alpha)}(z)$ displayed in Theorem \ref{Th1} locally around the origin. In particular, we specialize our findings to $\alpha = 1/2$ 
and show how to recover the expression of $M_t^{(1/2)}(z)$ derived in \cite{DHH}.

\subsection{A Ricatti equation}
Recall the the Cauchy-Stieltjes transform $G_t^{(\alpha)}$ of $\mu_{t}^{(\alpha, \alpha)}$:
\begin{equation*}
G_t^{(\alpha)}(z) = \int \frac{1}{z-x}\mu_{t}^{(\alpha, \alpha)}(dx) 
\end{equation*}
and recall from Proposition 7.1. in \cite{Dem} that: 
\begin{equation*}
\partial_tG_t^{(\alpha)}(z)= \partial_z\left[(1-2\alpha)zG_t^{(\alpha)}(z) +  \alpha z(z-1)[G_t^{(\alpha)}]^2(z) \right].
\end{equation*}
Rather than analyzing the characteristic curves of this pde, we shall consider the one satisfied by the moment generating function $M_t^{(\alpha)}$.
Then straightforward computations yield
\begin{equation*}
\partial_tM_t^{(\alpha)}(z)= -z \partial_z\left[(1-2\alpha)M_t^{(\alpha)}(z) +  \alpha (1-z)[M_t^{(\alpha)}]^2(z) \right], \quad |z| < 1 .
\end{equation*}
Consequently, the characteristic curves are differentiable curves $t \mapsto z_t^{(\alpha)} \in \mathbb{D}$ satisfying (at least) locally the non linear ordinary differential equation (ODE):
\begin{equation}\label{Charc1}
(z^{(\alpha)})'_t = (1-2\alpha)z_t^{(\alpha)} + 2\alpha z_t^{(\alpha)}(1-z_t^{(\alpha)}) f_t^{(\alpha)}, 
\end{equation}
where we set 
\begin{equation*}
f_t^{(\alpha)} := M_t^{(\alpha)}(z_t^{(\alpha)})
\end{equation*}
 and where the initial value $z_0$ lies in a neighborhood of the origin. Along these curves, the function $f^{(\alpha)}$ solves the following ODE:
\begin{equation}\label{Char2}
(f^{(\alpha)})'_t = \alpha z_t^{(\alpha)} [f_t^{(\alpha)}]^2.
\end{equation}
For sake of simplicity, let us choose $\mu_{0}^{(\alpha, \alpha)} = \delta_1$, that is:
\begin{equation*}
M_0^{(\alpha)}(z) = \frac{1}{1-z},
\end{equation*}
which implies that 
\begin{equation*}
f_0^{(\alpha)} = M_0^{(\alpha)}(z_0) = \frac{1}{1-z_0}.
\end{equation*}
From the operator-algebraic point of view, this choice amounts to take $P = Q$ so that $J_0^{(\alpha,\alpha)} = PY_0QY_0^{\star}P = P$. Nonetheless, the computations below obviously extend to any initial value $M_0^{(\alpha)}$ 
upon replacing $1-z_0$ by $1/M_0^{(\alpha)}(z_0)$.   

Back to the analysis of the characteristic curves, we define 
\begin{equation*}
y_t^{(\alpha)} := \int_0^t z_s^{(\alpha)} ds, 
\end{equation*}
and deduce from \eqref{Char2} that 
\begin{equation*}
f_t^{(\alpha)} = \frac{1}{1-z_0 -  \alpha y_t^{(\alpha)}}.
\end{equation*}
As a result, \eqref{Charc1} locally entails: 
\begin{equation*}
[1-z_0 - \alpha y_t^{(\alpha)}](y^{(\alpha)})^{''}_t = (1-2\alpha)[1-z_0 - \alpha y_t^{(\alpha)}](y^{(\alpha)})'_t + 2\alpha (y^{(\alpha)})'_t[1-(y^{(\alpha)})'_t],
\end{equation*}
whence we derive the autonomous ODE
\begin{equation*}
(f_t^{(\alpha)})^{''}_t = -(f^{(\alpha)})'_t [(2\alpha-1) - 2\alpha f_t^{(\alpha)}].
\end{equation*}
After integration, we get the Ricatti equation
 \begin{align*}
(f_t^{(\alpha)})'_t &= (1-2\alpha)f_t^{(\alpha)} + \alpha (f_t^{(\alpha)})^2 + C(z_0,\alpha),
\end{align*}
where $C(z_0,\alpha)$ is determined by the initial data at $t=0$:
\begin{align*}
C(z_0,\alpha) &= (f_t^{(\alpha)})'(0) + (2\alpha-1)f_t^{(\alpha)}(0) - \alpha [f_t^{(\alpha)}(0)]^2
\\& = \frac{\alpha z_0}{(1-z_0)^2} + \frac{2\alpha-1}{1-z_0} - \frac{\alpha}{(1-z_0)^2} = \frac{\alpha-1}{1-z_0}.
\end{align*}
Completing the square, the Ricatti equation takes the form 
 \begin{align}\label{Ric0}
(f_t^{(\alpha)})'_t  = \alpha \left(f_t^{(\alpha)} +\frac{1-2\alpha}{2\alpha}\right)^2 - \frac{1-\alpha}{1-z_0} - \frac{(1-2\alpha)^2}{4\alpha},
\end{align}
and letting $g_t^{(\alpha)} = f_{t/\alpha}^{(\alpha)}$, it readily follows that 
 \begin{align}\label{Ric}
(g_t^{(\alpha)})'_t= \left(g_t^{(\alpha)}+A(\alpha)\right)^2 -B(1-z_0,\alpha),
\end{align}
where 
\begin{equation}\label{DefAB}
A(\alpha) = \frac{1-2\alpha}{2\alpha}, \quad B(1-z_0, \alpha) := \frac{1-\alpha}{\alpha(1-z_0)} + [A(\alpha)]^2.
\end{equation}

\subsection{Back to the case $\alpha =1/2$}
As we already mentioned in the introduction, the Lebesgue decomposition of $\mu_{t}^{(1/2, 1/2)}$ is explicit and already known. However, its derivation in \cite{DHH} did not rely on characteristic curves of the pde satisfied by $M_t^{(1/2)}$. In this paragraph, 
we specialize the findings of the previous paragraph to $\alpha = 1/2$ and retrieve $M_t^{(1/2)}$. To this end, recall from \cite{DHH} the expression:
\begin{equation}\label{EXP12}
M_t^{(1/2)}(z)= \frac{1}{\sqrt{1-z}} \left[1+2\eta_{2t}(\psi(z))\right], \quad |z| < 1,
\end{equation}
where 
\begin{equation*}
\psi(z) := \frac{1-\sqrt{1-z}}{1+\sqrt{1-z}}, \quad z \in \mathbb{C} \setminus [1, +\infty[,
\end{equation*}
and 
\begin{equation*}
\eta_{2t}(z) := \sum_{n \geq 1} \tau[(Y_{2t})^n]z^n,
\end{equation*}
is the moment generating function of the spectral distribution of $Y_{2t}$. Note in passing that $\psi$ is the compositional inverse in the cut plane $\mathbb{C} \setminus [1, +\infty[$ onto the open unit disc $\mathbb{D}$ 
of the map 
\begin{equation*}
\psi^{-1}(z) = \frac{4z}{(1+z)^2},
\end{equation*} 
and that 
\begin{equation*}
H_{2t}(z) := 1 + 2\eta_{2t}(z)
\end{equation*} 
is the Herglotz transform of the spectral distribution of $Y_{2t}$ (\cite{Bia1}). Now, if $\alpha = 1/2$ then $A(1/2) = 0$ and 
\begin{equation*}
B(1-z_0, 1/2) = \frac{1}{1-z_0} = M_0^{(1/2)}(z). 
\end{equation*}
Consequently, the Ricatti equation \eqref{Ric} reduces to 
\begin{equation*}
(g^{(1/2)})'_t = (g_t^{(1/2)})^2 - B(1-z_0, 1/2),
\end{equation*}
which yields after integration:
\begin{equation*}
f_t^{(1/2)} = g_{t/2}^{(1/2)} = \sqrt{B(1-z_0, 1/2)} \frac{1+\xi_{2t}(\sqrt{B(1-z_0, 1/2)})}{1-\xi_{2t}(\sqrt{B(1-z_0, 1/2)})}
\end{equation*}
where we recall the map:
\begin{equation*}
\xi_{2t}(z) = \frac{z-1}{z+1}e^{tz}.
\end{equation*}
Keeping in mind \eqref{Char2} and using again \eqref{Ric0}, we further obtain 
\begin{align*}
z_t^{(1/2)} = \frac{2(f^{(1/2)})'_t}{[f_t^{(1/2)}]^2}  & = 1-\frac{B(1-z_0, 1/2)}{[f_t^{(1/2)}]^2} 
\\& = \frac{4\xi_{2t}(\sqrt{B(1-z_0,1/2)})}{(1+\xi_{2t}(\sqrt{B(1-z_0,1/2)}))^2} = \psi^{-1}\left[\xi_{2t}(\sqrt{B(1-z_0,1/2)})\right]. 
\end{align*}
Equivalently, $\xi_{2t}\left[\sqrt{B(1-z_0, 1/2)}\right] = \psi(z_t^{(1/2)})$, whence $\sqrt{B(1-z_0,1/2)} = 1+2\eta_{2t}[\psi(z_t^{(1/2)}))]$ since $\xi_{2t}$ is the compositional inverse of $H_{2t}$ in $\mathbb{D}$. Altogether, we end up with: 
\begin{equation*}
M_t^{(1/2)}(z_t) = f_t^{(1/2)} = \sqrt{B(1-z_0,1/2)} \frac{1+\psi(z_t^{(1/2)} )}{1-\psi(z_t^{(1/2)} )} = \frac{1}{\sqrt{1-z_t^{(1/2)}}} \left[1+2\eta_{2t}(\psi(z_t^{(1/2)} ))\right],
\end{equation*}
and retrieve \eqref{EXP12} locally around the origin. By analytic continuation, \eqref{EXP12} extends to $\mathbb{D}$ and its RHS is analytic in $\mathbb{C} \setminus [1, +\infty[$.

\subsection{General ranks} 
Mimicking the previous computations, we integrate locally the Ricatti equation \eqref{Ric} for real $z_0$. Before proceeding, note firstly that 
\begin{equation*}
\left(g_0^{(\alpha)}+A(\alpha)\right)^2 -B(1-z_0,\alpha) = \frac{z_0}{(1-z_0)^2}
\end{equation*}
does not vanish unless $z_0 = 0$ in which case \eqref{Charc1} reduces simply to $z^{(\alpha)} = 0$ locally. Secondly, for fixed and small enough $z_0 > 0$\footnote{The analysis is similar for $z < 0$ and leads to the same result.}, \eqref{Ric} is integrated as:
\begin{equation*}
\int_{g_0^{(\alpha)}}^{g_t^{(\alpha)}} \frac{du}{\left(u+A(\alpha)\right)^2 -B(1-z_0,\alpha)} = t,
\end{equation*} 
or equivalently 
\begin{equation}\label{E0}
f_t^{(\alpha)} = g_{\alpha t}^{(\alpha)} = \sqrt{B(1-z_0,\alpha)}\frac{1+F_{4\alpha t}^{(\alpha)}(1-z_0)}{1-F_{4\alpha t}^{(\alpha)}(1-z_0)} - A(\alpha),
\end{equation}
where we set 
\begin{equation*}
F_{4\alpha t}^{(\alpha)}(1-z_0) := \frac{\mathcal{I}(1-z_0,\alpha)-1}{\mathcal{I}(1-z_0,\alpha)+1}e^{2\alpha \sqrt{B(1-z_0,\alpha)} t}
\end{equation*}
and 
\begin{equation*}
\mathcal{I}(1-z_0,\alpha) := \frac{f(0)+ A(\alpha)}{\sqrt{B(1-z_0,\alpha)}} = \frac{A(\alpha)+ \displaystyle \frac{1}{1-z_0}}{\sqrt{[A(\alpha)]^2 + \displaystyle \frac{1-\alpha}{\alpha(1-z_0)}}}.
\end{equation*}
Now, we appeal to \eqref{Char2} together with \eqref{Ric0} to write 
\begin{equation}\label{E1}
z_t^{(\alpha)} = \frac{(f_t^{(\alpha)})'_t}{\alpha [f_t^{(\alpha)}]^2}  = 1 + \frac{2A(\alpha)}{f_t^{(\alpha)}} - \frac{B(1-z_0,\alpha) - [A(\alpha)]^2}{[f_t^{(\alpha)}]^2}.
\end{equation}
Consequently, we have 
\begin{equation}\label{E2}
f_t^{(\alpha)} = \frac{-A(\alpha) +\sqrt{[A(\alpha)]^2z_t^{(\alpha)} + B(1-z_0,\alpha)(1-z_t^{(\alpha)})}}{1-z_t^{(\alpha)}} 
\end{equation}
where the choice of the square root branch is subject to the initial value $f(0) = M_0^{(\alpha)}(z_0) = 1/(1-z_0)$. 

Note that this expression of $f_t^{(\alpha)} = M_t^{(\alpha)}(z_t^{(\alpha)})$ has the same shape as the one of the moment generating function of the stationary spectral distribution of $J_t$ (\cite{Dem}, p.129):
\begin{align*}
M_{\infty}^{(\alpha)}(z) := \lim_{t \rightarrow + \infty} M_t^{(\alpha)}(z) &= \frac{(2\alpha-1)/\alpha +\sqrt{1/\alpha^2 - 4(1-\alpha)z/\alpha}}{2(1-z)} 
\\& = \frac{-A(\alpha) +\sqrt{[A(\alpha)]^2z + (1-z)/(4\alpha^2)}}{1-z} 
\\& = \frac{-A(\alpha) +\sqrt{[A(\alpha)]^2z + [A(\alpha) +1]^2(1-z)}}{1-z} 
\end{align*}
where we recall that $A(\alpha) = (1-2\alpha)/(2\alpha)$.

Finally, it remains to express $\sqrt{B(1-z_0, \alpha)}$ through $z_t^{(\alpha)}$ for fixed $\alpha$. To this end, we find it better to rewrite (using the definition of $B(1-z_0,\alpha)$ in equation \eqref{DefAB}): 
\begin{align*}
\mathcal{I}(1-z_0,\alpha) & = \frac{\alpha}{1-\alpha} \frac{B(1-z_0,\alpha) + \displaystyle \frac{1-2\alpha}{4\alpha^2}}{\sqrt{B(1-z_0,\alpha)}}
\\& = \frac{\alpha}{1-\alpha} \left[\sqrt{B(1-z_0,\alpha)} +  \frac{1-2\alpha}{4\alpha^2 \sqrt{B(1-z_0,\alpha)}}\right].
\end{align*}
Doing so allows to express $F_{4\alpha t}^{(\alpha)}(1-z_0)$ (and in turn $f(t)$) only through $u:= \sqrt{B(1-z_0,\alpha)}$ as:
\begin{align*}
F_{4\alpha t}^{(\alpha)}(1-z_0) & = \frac{\sqrt{B(1-z_0,\alpha)} +  \displaystyle \frac{1-2\alpha}{4\alpha^2 \sqrt{B(1-z_0,\alpha)}} - \frac{1-\alpha}{\alpha}}{\sqrt{B(1-z_0,\alpha)} +  \displaystyle \frac{1-2\alpha}{4\alpha^2 \sqrt{B(1-z_0,\alpha)}} + \frac{1-\alpha}{\alpha}}e^{2\alpha \sqrt{B(1-z_0,\alpha)} t} 
\\& = V_{4\alpha t}^{(\alpha)}(\sqrt{B(1-z_0,\alpha)})
\end{align*}
where
\begin{align*}
V_{4\alpha t}^{(\alpha)}(u) := \frac{u+  \displaystyle \frac{1-2\alpha}{4\alpha^2 u} - \frac{1-\alpha}{\alpha}}{u +  \displaystyle \frac{1-2\alpha}{4\alpha^2 u} + \frac{1-\alpha}{\alpha}}e^{2\alpha u t}
& = \frac{\left(u - \displaystyle \frac{1}{2\alpha}\right)\left(u - \displaystyle \frac{1-2\alpha}{2\alpha}\right)}{\left(u + \displaystyle \frac{1}{2\alpha}\right)\left(u + \displaystyle \frac{1-2\alpha}{2\alpha}\right)}e^{2\alpha u t}
\\& =  \frac{\left(u - A(\alpha)-1\right)\left(u - A(\alpha)\right)}{\left(u + A(\alpha)+1\right)\left(u + A(\alpha)\right)}e^{2\alpha u t}.
\end{align*}
Consequently,  \eqref{E1} may be written as 
\begin{equation}\label{Inversion}
z_t^{(\alpha)} = 1+ 2A(\alpha) \frac{1-V_{4\alpha t}^{(\alpha)}(u)}{(u-A(\alpha)) +(u+A(\alpha))V_{4\alpha t}^{(\alpha)}(u)} - \left[u^2-(A(\alpha))^2\right] \left[\frac{1-V_{4\alpha t}^{(\alpha)}(u)}{(u-A(\alpha)) +(u+A(\alpha))V_{4\alpha t}^{(\alpha)}(u)} \right]^2   
\end{equation}
 in the variable $u = \sqrt{B(1-z_0,\alpha)}$. 
 
 \subsection{Local invertibility: end of the proof of Theorem \ref{Th1}} 
 By the virtue of \eqref{E2}, we can get an expression of $M_t(z)$ locally around zero provided that we locally invert \eqref{Inversion} in the variable $u$ near $1/(2\alpha) = \sqrt{B(1, \alpha)}$. 
 To this end, we need to compute the derivative of the RHS of \eqref{Inversion} at $1/(2\alpha)$.  In this respect, quick computations yield  
 \begin{equation*}
 \partial_u V_t^{(\alpha)}\left(\frac{1}{2\alpha}\right) = e^t \frac{\alpha^2}{1-\alpha},
 \end{equation*}
 whence we deduce that the sought derivative equals to:
 \begin{equation*}
\frac{(1+2\alpha)}{2(1-\alpha)}e^t > 0.
 \end{equation*}
Consequently, there exists a function $J_{4\alpha t}^{(\alpha)}$ locally around the origin such that 
\begin{equation*}
J_{4\alpha t}^{(\alpha)}(0) = \frac{1}{2\alpha}, \quad \sqrt{B(1-z_0, \alpha)} = J_{4\alpha t}^{(\alpha)}(z_t^{(\alpha)}), 
\end{equation*}
and it follows from \eqref{E2} that 
\begin{equation*}
f_t^{(\alpha)} = \frac{-A(\alpha) +\sqrt{[A(\alpha)]^2z(t) + (1-z(t))[J_{4\alpha t}^{(\alpha)}(z(t))]^2}}{1-z(t)}.
\end{equation*}
As a result,
\begin{equation*}
M_t^{(\alpha)}(z) = \frac{-A(\alpha) +\sqrt{[A(\alpha)]^2z + (1-z)[J_{4\alpha t}^{(\alpha)}(z)]^2}}{1-z}
\end{equation*}
locally around the origin, proving the first part of Theorem \ref{Th1}.

Finally, let $\psi_{t}^{(\alpha)}(z) :=V_{4\alpha t}^{(\alpha)}\left[J_{4\alpha t}^{(\alpha)}(z)\right]$, then \eqref{E0} reads
\begin{align*}
f_t^{(\alpha)} &= \sqrt{B(1-z_0,\alpha)}\frac{1+F_{4\alpha t}^{(\alpha)}(1-z_0)}{1-F_{4\alpha t}^{(\alpha)}(1-z_0)} - A(\alpha) 
\\&= J_{4\alpha t}^{(\alpha)}(z_t^{(\alpha)}) \frac{1+V_{4\alpha t}^{(\alpha)}\left[J_{4\alpha t}^{(\alpha)}(z_t^{(\alpha)})\right]}{1-V_{4\alpha t}^{(\alpha)}\left[J_{4\alpha t}^{(\alpha)}(z_t^{(\alpha)})\right]} - A(\alpha)
\\& = J_{4\alpha t}^{(\alpha)}(z_t^{(\alpha)}) \frac{1+\psi_{4\alpha t}^{(\alpha)}(z_t^{(\alpha)})}{1-\psi_{4\alpha t}^{(\alpha)}(z_t^{(\alpha)})} - A(\alpha).
\end{align*} 
Then, it holds locally around the origin that
\begin{align}\label{Moment-generating2}
M_t^{(\alpha)}(z) = J_{4\alpha t}^{(\alpha)}(z)\frac{1+\psi_t^{(\alpha)}(z)}{1-\psi_t^{(\alpha)}(z)}-A(\alpha)
\end{align}
and Theorem \ref{Th1} is proved.

\section{Properties of the map $V_{4\alpha t}^{(\alpha)}$}
When $\alpha = 1/2$, the map $V_{2t}^{(1/2)}$ reduces on the open right half plane to $\xi_{2t}$, which is known to be the compositional inverse in a Jordan domain $\Gamma_t$ 
containing $z=1$ onto $\mathbb{D}$ of the Herglotz transform $H_{2t} = 1 + 2\eta_{2t}$ of the spectral distribution of $Y_{2t}$ (\cite{Bia1}). 
Equivalently, the map 
\begin{equation*}
u \mapsto \frac{u}{1+u}e^{t(1+2u)} = \xi_{2t}(1+2u)
\end{equation*}
is the inverse of the moment generating function $\eta_{2t}$ in a neighborhood of the origin onto $\mathbb{D}$ and in turn $u \mapsto e^{t(1+2u)}$ is the $S$-transform of $Y_{2t}$ (\cite{Ber-Voi}). 
It is then natural to wonder whether these properties extend or not to any $\alpha \neq 1/2$. Regarding the first property, we shall investigate the variations of $V_{4\alpha t}^{(\alpha)}$ in the positive half-line and in particular to check whether it is a bijection from some interval containing $z=1$ onto $(-1,1)$. Though this is a technical (and at some point tricky) exercise from real analysis, it already reveals on the one hand the contrast between the values 
$\alpha <1/2$ and $\alpha > 1/2$ as well as the interplay between the rank $\alpha$ and the time $t$ when $\alpha > 1/2$. On the other hand, 
our computations below show that the phase transition occurring at time $t=2$ when $\alpha = 1/2$ and corresponding to the free unitary Brownian motion splits into two phase transitions when $\alpha > 1/2$ at two times $t_0(\alpha) \leq 2 \leq t_1(\alpha)$ 
which collapse when $\alpha = 1/2$.  

\subsection{Variations of $V_{4\alpha t}^{(\alpha)}$ on the real line}
For sake of simplicity, let us consider the rescaled map 
\begin{equation*}
\tilde{V}_{t}^{(\alpha)}(u) := V_{4\alpha t}^{(\alpha)}\left(\frac{u}{2\alpha}\right) = \frac{(u-1)(u-1+2\alpha)}{(u+1)(u+1-2\alpha)} e^{tu}, \quad u > 0,
\end{equation*}
which is singular at $2\alpha-1$ if $\alpha > 1/2$ and vanishes otherwise at $1-2\alpha \geq 0$. In this respect, we shall prove: 
\begin{pro}
If $\alpha \geq 1/2$ then for any $t > 0$, $\tilde{V}_{t}^{(\alpha)}$ is a bijection from an open interval 
\begin{equation*}
I_{t,\alpha} \subsetneq (2\alpha-1, +\infty), 
\end{equation*}
onto $(-1,1)$. Otherwise 
\begin{equation*}
\tilde{V}_{t}^{(\alpha)}\left([0,+\infty)\right) \subsetneq (-1,1)
\end{equation*}
for any time $t \in [0,2]$. 
\end{pro}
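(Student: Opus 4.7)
The plan is to study the real-variable behavior of $\tilde{V}_t^{(\alpha)}$ on the positive axis through its logarithmic derivative. A direct differentiation gives
\[
\frac{(\tilde{V}_t^{(\alpha)})'(u)}{\tilde{V}_t^{(\alpha)}(u)} \;=\; \frac{2(1-a)(u^2+a)}{(u^2-1)(u^2-a^2)} + t \;=:\; L(u), \qquad a := 2\alpha-1,
\]
at every $u > 0$ where $\tilde{V}_t^{(\alpha)}$ is nonzero and finite, so that $(\tilde{V}_t^{(\alpha)})' = L \cdot \tilde{V}_t^{(\alpha)}$ lets the sign of $L$ read off the monotonicity of $\tilde{V}_t^{(\alpha)}$. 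The signs of the individual factors of $L$ and of $\tilde{V}_t^{(\alpha)}$ on each subinterval delimited by $0$, $|1-2\alpha|$ and $1$ yield the basic sign tables that drive both cases.

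For $\alpha \geq 1/2$, the natural domain is $(2\alpha-1, +\infty)$ because the pole of $\tilde{V}_t^{(\alpha)}$ sits at its left endpoint. The boundary behavior $\tilde{V}_t^{(\alpha)}(u) \to -\infty$ as $u \to (2\alpha-1)^+$, $\tilde{V}_t^{(\alpha)}(1) = 0$ and $\tilde{V}_t^{(\alpha)}(u) \to +\infty$ as $u \to +\infty$ is read off directly. On $(1,+\infty)$ all three factors $1-a$, $u^2+a$ and $(u^2-1)(u^2-a^2)$ are positive, hence $L>0$, and $\tilde{V}_t^{(\alpha)}$ strictly increases from $0$ to $+\infty$; in particular a unique $u_1>1$ realizes $\tilde{V}_t^{(\alpha)}(u_1)=1$. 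On $(2\alpha-1,1)$ the rational part of $L$ attains its maximum in $v := u^2$ at $v^* = \sqrt{a}(1+a)-a$, giving the threshold
\[
t_1(\alpha) := \frac{2(1+\sqrt{a})^2}{1-a^2} \;\geq\; 2,
\]
with equality only when $\alpha = 1/2$. For $t<t_1(\alpha)$ one has $L<0$ throughout $(2\alpha-1,1)$, so $\tilde{V}_t^{(\alpha)}$ is strictly monotonic on all of $(2\alpha-1,+\infty)$; for $t \geq t_1(\alpha)$ a local maximum $u_a$ and a local minimum $u_b$ emerge in $(2\alpha-1,1)$ and the map is strictly monotonic only on $(u_b,+\infty)$. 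In this second regime I would establish $\tilde{V}_t^{(\alpha)}(u_b)<-1$ through the envelope identity $\frac{d}{dt}\tilde{V}_t^{(\alpha)}(u_b(t)) = u_b(t)\,\tilde{V}_t^{(\alpha)}(u_b(t))<0$, which makes the critical value monotone decreasing in $t$ and reduces the claim to $t=t_1(\alpha)$ (where $u_b=\sqrt{v^*}$); at this boundary the bound becomes an explicit algebraic verification. An IVT argument then produces $u_{-1}$ with $\tilde{V}_t^{(\alpha)}(u_{-1})=-1$, and $I_{t,\alpha} := (u_{-1},u_1)$ is the desired bijection interval, properly contained in $(2\alpha-1,+\infty)$.

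For $\alpha < 1/2$ there is no pole on $(0,+\infty)$, direct substitution gives $\tilde{V}_t^{(\alpha)}(0)=1$, and the map vanishes at $1-2\alpha$ and at $1$, is positive on $(0,1-2\alpha)\cup(1,+\infty)$, strictly negative on $(1-2\alpha,1)$, and tends to $+\infty$ as $u\to+\infty$. The stated proper inclusion (read as: the image of the positive half-line does not cover the whole of $(-1,1)$) thus reduces to
\[
\inf_{u\in(1-2\alpha,1)} \tilde{V}_t^{(\alpha)}(u) \;>\; -1 \qquad \text{for all } t\in[0,2].
\]
Since $\partial_t\tilde{V}_t^{(\alpha)}(u)=u\,\tilde{V}_t^{(\alpha)}(u)$ is negative on this interval, the infimum decreases monotonically in $t$, and it suffices to check $t=2$. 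The boundary case $\alpha=1/2$ gives $\tilde{V}_2^{(1/2)}(u)=(u-1)e^{2u}/(u+1)$, whose infimum over $(0,+\infty)$ is exactly $-1$, approached as $u\to 0^+$ but never attained. For strict $\alpha<1/2$ the negative excursion is pushed onto $(1-2\alpha,1)$, bounded away from $0$, forcing the infimum to lie strictly above $-1$; the cleanest route to making this quantitative is the algebraic duality
\[
\tilde{V}_t^{(\alpha)}(u)\,\tilde{V}_t^{(1-\alpha)}(u) \;=\; \bigl[\xi_{2t}(u)\bigr]^2,
\]
valid for all $u>0$, which transfers the estimate to Case $\alpha>1/2$ applied to $1-\alpha$ together with Biane's control of $\xi_{2t}$.

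The principal obstacle is precisely this pair of quantitative comparisons at a critical value: $\tilde{V}_{t_1(\alpha)}^{(\alpha)}(\sqrt{v^*})<-1$ for $\alpha\in(1/2,1)$, and the companion bound at the minimizer for $\alpha\in(0,1/2)$, $t=2$. Both amount to evaluating $\tilde{V}_t^{(\alpha)}$ at the root of an explicit algebraic equation, and the bound is tight as $\alpha\to 1/2$, so the strict inequality has vanishing margin. The duality identity above is the conceptual bridge linking the two regimes, and I expect it to be the most promising route to a uniform argument.
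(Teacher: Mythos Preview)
Your approach is structurally cleaner than the paper's at the threshold analysis: working with the logarithmic derivative $L$ identifies $t_1(\alpha)$ directly as the maximum of $-L$ on $(2\alpha-1,1)$, bypassing the paper's intermediate threshold $T(\alpha)$ and the second time $t_0(\alpha)$ entirely. The envelope identity $\frac{d}{dt}\tilde{V}_t^{(\alpha)}(u_b(t)) = u_b\,\tilde{V}_t^{(\alpha)}(u_b)$ and the duality $\tilde{V}_t^{(\alpha)}\tilde{V}_t^{(1-\alpha)} = \xi_{2t}^2$ are nice structural observations.

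The genuine gap is the one you flag yourself: the boundary inequality $\tilde{V}_{t_1(\alpha)}^{(\alpha)}(\sqrt{v^*}) \le -1$ for $\alpha > 1/2$. This is not a routine verification --- equality holds at $\alpha = 1/2$, so any argument must be tight there, and evaluating $\tilde{V}_{t_1(\alpha)}^{(\alpha)}$ at $\sqrt{v^*} = \sqrt{s(1-s+s^2)}$ with $t_1(\alpha) = 2(1+s)/[(1-s)(1+s^2)]$ (where $s=\sqrt{2\alpha-1}$) does not produce a tractable expression. The paper sidesteps this reduction altogether: for each fixed $t > t_1(\alpha)$ it first shows the local minimum $u_b$ lies in $[\sqrt{v^*},+\infty)$, so that $\tilde{V}_t^{(\alpha)}(u_b)$ is the minimum of $\tilde{V}_t^{(\alpha)}$ on that half-line; it then evaluates at the specific point $(2\alpha-1)^{1/4} > \sqrt{v^*}$, where the rational factor collapses to $-\frac{x^2-x+1}{x^2+x+1}$ with $x=(2\alpha-1)^{1/4}$. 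The resulting map $x\mapsto -\frac{x^2-x+1}{x^2+x+1}e^{tx}$ is easily seen to be decreasing on $[0,1]$ for every $t\ge 2$, whence $\tilde{V}_t^{(\alpha)}[(2\alpha-1)^{1/4}] < -1$ and therefore $\tilde{V}_t^{(\alpha)}(u_b) < -1$. Your envelope reduction is elegant but trades a family of easy one-parameter inequalities (one per $t$) for a single awkward two-parameter one.

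For $\alpha<1/2$, your duality argument, once unwound, reduces to the paper's direct factorization $\tilde{V}_t^{(\alpha)}(u) = \xi_{2t}(u)\cdot\frac{u-1+2\alpha}{u+1-2\alpha}$: on $(1-2\alpha,1)$ the second factor is positive, increasing, and bounded above by its value $\alpha/(1-\alpha)<1$ at $u=1$, while $|\xi_{2t}(u)|<1$ there for $t\le 2$ by Biane's monotonicity of $\xi_{2t}$. This gives $|\tilde{V}_t^{(\alpha)}(u)| < \alpha/(1-\alpha)$ uniformly for $t\in[0,2]$, so the reduction to $t=2$ is unnecessary.
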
 

\begin{proof}
Straightforward computations show that the derivative of $\tilde{V}_{4\alpha t}^{(\alpha)}$ is given by: 
\begin{equation*}
\partial_u \tilde{V}_{t}^{(\alpha)}(u) = \frac{e^{ut}}{(u+1)^2(u+1-2\alpha)^2}\left[4(1-\alpha)(u^2-1+2\alpha) +t(u^2-1)(u^2 - (2\alpha-1)^2\right].
\end{equation*}
Set 
\begin{equation*}
R_{t,\alpha}(y) := 4(1-\alpha)(y-1+2\alpha) +t(y-1)(y - (2\alpha-1)^2), \quad y > 0,
\end{equation*}
so that 
\begin{equation*}
\partial_u \tilde{V}_{t}^{(\alpha)}(u) = \frac{e^{ut}}{(u+1)^2(u+1-2\alpha)^2}R_{t,\alpha}(u^2).
\end{equation*}
Then 
\begin{equation*}
\partial_y R_{t,\alpha}(y) = 4(1-\alpha) +t[2y - (1+(1-2\alpha)^2)] 
\end{equation*}
which vanishes if and only if 
\begin{equation*}
t > T(\alpha) := \frac{4(1-\alpha)}{1+(1-2\alpha)^2}. 
\end{equation*}
We are then led to distinguish two cases: 
\begin{enumerate}
\item $t \leq T(\alpha)$: in this case $\partial_yR_{t,\alpha}\geq 0$ and $R_{t,\alpha}$ has at most one real root $y_{t, \alpha}$. 
\begin{itemize}
\item If $\alpha > 1/2$ then $R_{t,\alpha}(0) > 0$ and in turn $\tilde{V}_{t}^{(\alpha)}$ is increasing on $(2\alpha-1, +\infty)$. It is therefore a bijection from this interval onto $\mathbb{R}$ and there exists a unique open interval 
$I_{t,\alpha} \subset (2\alpha-1, +\infty)$ whose image is exactly $(-1,1)$. Moreover, $I_{t,\alpha} \subset (\sqrt{2\alpha-1}, +\infty)$ since 
\begin{equation*}
\tilde{V}_{t}^{(\alpha)}(\sqrt{2\alpha-1}) = -e^{t\sqrt{2\alpha-1}} < -1.
\end{equation*}
\item Otherwise $\alpha < 1/2$ and we can readily check that $T(\alpha) > 2$ and that 
\begin{equation*}
R_{t,\alpha}(0) < 0. 
\end{equation*}
Consequently, $R_{t,\alpha}$ has a unique root $y_{t,\alpha}$ which further satisfies: 
\begin{equation*}
\sqrt{1-2\alpha} \leq \sqrt{y_{t,\alpha}} < 1
\end{equation*}
 since 
\begin{equation*}
R_{t,\alpha}(1-2\alpha) = -4\alpha^2t (1-2\alpha) \leq 0 < R_{t,\alpha}(1) = 8\alpha(1-\alpha).
\end{equation*}  
Equivalently, $\sqrt{y_{t,\alpha}}$ is the unique minimum of $\tilde{V}_{t}^{(\alpha)}$ in $(0,\infty)$ and we will show that 
$$\tilde{V}_{t}^{(\alpha)}(\sqrt{y_{t,\alpha}}) > -1$$ 
for any $t \in [0,2]$. For these times, we shall use the fact that the map 
$u \mapsto \xi_{2t}(u)$ is increasing on $(0,\infty)$ (\cite{Bia1}) in order to derive the estimate
\begin{equation*}
|\tilde{V}_{t}^{(\alpha)}(u)| = |\xi_{t}(u)|  \frac{(u-1+2\alpha)}{(u+1-2\alpha)} \leq \frac{\alpha}{1-\alpha} < 1,
\end{equation*}
valid for any $\sqrt{2\alpha-1} \leq u < 1$ whence $\tilde{V}_{t}^{(\alpha)}(\sqrt{y_{t,\alpha}}) > -1, t \in [0,2],$ as desired. Since 
\begin{equation*}
\tilde{V}_{t}^{(\alpha)}(0) = 1, \quad \lim_{u \rightarrow +\infty} \tilde{V}_{t}^{(\alpha)}(u) = +\infty,
\end{equation*}
\end{itemize}
then we conclude that $\tilde{V}_{t}^{(\alpha)} \left([0,\infty)\right) \subsetneq (-1,1)$ for any $t \in [0,2]$. 

\item $t > T(\alpha)$: 
in this case, the unique root of $\partial_y R_{t,\alpha}$ is 
\begin{equation*}
\frac{t[1+(1-2\alpha)^2] - 4(1-\alpha)}{2t} < 1
\end{equation*}
and we will only consider values $\alpha > 1/2$. In this respect, we compute the minimum value of $R_{t,\alpha}$:
\begin{equation*}
R_{t,\alpha}\left(\frac{t[1+(1-2\alpha)^2] - 4(1-\alpha)}{2t} \right)  = \frac{4(1-\alpha)}{t}[2\alpha^2 t-\alpha^2(1-\alpha)t^2 - (1-\alpha)],
\end{equation*}
which takes positive values on $[t_0(\alpha), t_1(\alpha)]$ and negative values when $t \geq t_1(\alpha)$, where 
\begin{equation}\label{Times}
t_0(\alpha):= \frac{\alpha - \sqrt{2\alpha-1}}{\alpha(1-\alpha)} \leq T(\alpha) \leq  t_1(\alpha):= \frac{\alpha + \sqrt{2\alpha-1}}{\alpha(1-\alpha)}.
\end{equation}
If $t \in [t_0(\alpha), t_1(\alpha)]$ then $\tilde{V}_{t}^{(\alpha)}$ is increasing on $(2\alpha-1, +\infty)$ onto $\mathbb{R}$ and we draw the same conclusion about the existence of $I_{t,\alpha}$. 

Otherwise, $t > t_1(\alpha)$ whence we deduce that 
\begin{align*}
\frac{t[1+(1-2\alpha)^2] - 4(1-\alpha)}{2t} > \frac{1+(2\alpha-1)^2}{2} -\frac{2(1-\alpha)}{t_1(\alpha)} & = \sqrt{2\alpha-1}(2\alpha - \sqrt{2\alpha-1}) > 2\alpha-1.
\end{align*}
Moreover, straightforward computations show that
\begin{equation*}
R_{t,\alpha}\left(\sqrt{2\alpha-1}(2\alpha - \sqrt{2\alpha-1})\right) = 8\alpha\sqrt{2\alpha-1}\left[1-\alpha - \alpha t(\alpha- \sqrt{2\alpha-1})\right] < 0.
\end{equation*}
As a result, $\tilde{V}_{t}^{(\alpha)}$ has a unique minimum in the interval 
\begin{equation*}
\left[\sqrt{\sqrt{2\alpha-1}(2\alpha - \sqrt{2\alpha-1})}, +\infty\right).
\end{equation*}
It then suffices to prove that the corresponding minimum value is less than $-1$. To this end and for sake of simplicity, notice that 
\begin{equation*}
\sqrt{2\alpha-1}(2\alpha - \sqrt{2\alpha-1}) < \sqrt{2\alpha-1}
\end{equation*}
and that 
\begin{align*}
\tilde{V}_{t}^{(\alpha)}[(2\alpha-1)^{1/4}] & = -\frac{1-(2\alpha-1)^{1/4}}{1+(2\alpha-1)^{1/4}} \frac{1+(2\alpha-1)^{3/4}}{1-(2\alpha-1)^{3/4}}e^{t(2\alpha-1)^{1/4}} 
\\& = - \frac{x^2-x+1}{x^2+x+1}e^{tx}{}_{|x = (2\alpha-1)^{1/4}}.
\end{align*}
Since $t > t_1(\alpha) \geq 2$, then 
\begin{equation*}
x\mapsto  -\frac{x^2-x+1}{x^2+x+1}e^{tx}, 
\end{equation*}
is decreasing on $[0,1]$ so that 
\begin{equation*}
\tilde{V}_{t}^{(\alpha)}[(2\alpha-1)^{1/4}] < -1 
\end{equation*}
as desired. The proposition is proved. 
\end{enumerate}
\end{proof}

\begin{rem}
Recall from the previous proof that if $\alpha < 1/2$ and $t \leq T(\alpha)$, then $\sqrt{y_{t,\alpha}} \in (\sqrt{1-2\alpha}, 1)$ is the unique minimum of $\tilde{V}_{t}^{(\alpha)}$ in $(0,\infty)$ with minimal value $\tilde{V}_{t}^{(\alpha)}(\sqrt{y_{t,\alpha}}) > -1$. 
However, note that 
\begin{equation*}
\tilde{V}_{t}^{(\alpha)}(\sqrt{1-2\alpha}) = -\frac{(1-\sqrt{1-2\alpha})^2}{(1+\sqrt{1-2\alpha})^2}e^{t\sqrt{1-2\alpha})} < -1 
\end{equation*}
for large enough time $t$.
\end{rem}

\begin{rem}
The previous proof shows that if $\alpha \geq 1/2$ then the inverse image of $-1$ in $I_{t,\alpha}$ is bigger than $\sqrt{2\alpha-1}$ for any time $t > 0$ and that it is further bigger than 
\begin{equation*}
\sqrt{\sqrt{2\alpha-1}(2\alpha - \sqrt{2\alpha-1})} > \sqrt{2\alpha-1}
\end{equation*}
when $t \geq t_1(\alpha)$. 
However, numerical evidences show that 
\begin{equation*}
\tilde{V}_{t}^{(\alpha)}\left(\sqrt{\sqrt{2\alpha-1}(2\alpha - \sqrt{2\alpha-1})}\right) > -1
\end{equation*}
for $t \in [0,2)$ and small values of $\alpha \geq 1/2$.
\end{rem}

\section{A deformed $\chi$-transform: a saddle point analysis}
The map $\tilde{V}_{t}^{(\alpha)}$ reduces when $\alpha = 1/2$ to $\tilde{V}_{t}^{(1/2)} = \xi_{2t}$. Performing the variable change $u \mapsto 1+2u$ there and multiplying afterwards by $(u+1)/u$ lead to:   
\begin{equation*}
S_t^{(\alpha)}: u \mapsto  \frac{u + \alpha}{u + 1 - \alpha}e^{(1+2u) t},
 \end{equation*}
 which is obviously a $\alpha$-deformation of the S-transform of the free unitary Brownian motion $Y_{2t}$ (\cite{Ber-Voi}): 
 \begin{equation*}
 S_t^{(1/2)} := S_{Y_{2t}} := e^{(1+2u) t}. 
 \end{equation*} 
  Consequently, it is natural to ask whether $S_t^{(\alpha)}, \alpha \in (0,1],$ is still the S-transform of a probability distribution supported in the unit circle for any time $t \geq 0$.
 In this respect, note that Proposition 2.2 in \cite{Ber-Voi} forces the condition 
\begin{equation*}
|S_t^{(\alpha)}(0)| \geq  1 \quad \Leftrightarrow \quad \alpha \geq 1-\alpha \quad  \Leftrightarrow  \quad \alpha \geq 1/2.
\end{equation*}
However, note that the map
 \begin{equation*}
u \mapsto \frac{(u + \alpha)}{(u + 1 - \alpha)}
 \end{equation*}
is not the $S$-transform of a probability distribution on the unit circle unless $\alpha = 1/2$, and in turn $S_t^{(\alpha)}$ is not the product of two $S$-transforms. To see this, consider the following map:
\begin{equation}\label{STUC}
u \mapsto \frac{u(u + \alpha)}{(u+1)(u + 1 - \alpha)}
\end{equation}
and perform the variable change $u = z/(1-z)$ in a neighborhood of the origin to  
\begin{equation}\label{TSTUC}
z \mapsto z \frac{(1-\alpha) z + \alpha}{\alpha z + (1-\alpha)}, \quad |z| < (1-\alpha)/\alpha \leq 1.
\end{equation}
Assuming \eqref{STUC} is a $S$-transform of a probability distribution on the unit circle, then the map displayed in \eqref{TSTUC} would be the compositional inverse of the so-called $\eta$-transform 
of a probability measure on the unit circle (\cite{Bel-Ber}, p.73). In particular, this inverse would be analytic in the open unit disc $\mathbb{D}$ which is not true unless $\alpha = 1/2$. Indeed, the latter is given locally by:
\begin{equation*}
z \mapsto \frac{\sqrt{\alpha^2(1-z)^2 + 4(1-\alpha)^2 z} - \alpha(1-z)}{2(1-\alpha)},
\end{equation*}
which does not extend to an analytic function in $\mathbb{D}$ for any $\alpha > 1/2$ since 
\begin{equation*}
z \mapsto \alpha^2(1-z)^2 + 4(1-\alpha)^2 z
\end{equation*}
may take negative values on the vertical line $\{ x = (\alpha^2-2(1-\alpha)^2)/\alpha^2\}$ lying inside the open unit disc. 

All that to say that we are led to study the analyticity of the local inverse near the origin of:
 \begin{equation*}
\chi_t^{(\alpha)}(u) :=  \frac{u(u + \alpha)}{(u +1)(u + 1 - \alpha)}e^{(1+2u) t}
 \end{equation*}
which reduces when $\alpha =1/2$ to the $\chi$-transform of $Y_{2t}$. The Taylor coefficients of this inverse are afforded for instance by Lagrange inversion formula and we need to extract the $(n-1)$-th Taylor coefficients of 
 \begin{equation*}
u \mapsto \left[ \frac{(u+1)(u + 1-\alpha)}{(u + \alpha)}\right]^ne^{-(1+2u)n t}, \quad n \geq 1. 
 \end{equation*}
To proceed, we use Cauchy's Residue Theorem to expand:
\begin{equation*}
\left[\chi_t^{(\alpha)}(u)\right]^{-1}(u) = \sum_{n \geq 1} a_n^{(\alpha)}(t) (e^{-t}u)^n,  
\end{equation*}
where 
\begin{equation}\label{TCI}
a_n^{(\alpha)}(t) := \frac{1}{2i\pi n}\int_{\gamma} \left(1+\frac{1}{w}\right)^n \left[ \frac{(w + 1-\alpha)}{(w + \alpha)}\right]^n e^{-2ntw} dw
\end{equation}
and $\gamma$ is any simple curve inside the open disc of radius $\alpha$ and encircling the origin. Setting informally (we may take any determination of the logarithm):
\begin{equation*}
\phi_t^{(\alpha)}(w) := 2tw - \log\left[ \frac{(1+w)(w + 1-\alpha)}{w(w + \alpha)}\right],
\end{equation*}
then 
\begin{equation*}
a_n^{(\alpha)}(t) := \frac{1}{2i\pi n}\int_{\gamma} e^{-n\phi_t(w)} dw
\end{equation*}
so that the asymptotic behavior of $a_n^{(\alpha)}(t)$ as $n \rightarrow + \infty$ may be determined using the saddle point method. 

\subsection{Critical points of $\phi_t$}
The critical points of $\phi_t^{(\alpha)}$ are roots of the polynomial equation: 
\begin{equation*}
(1-\alpha)\left(w^2+w+\frac{\alpha}{2}\right) = -tw(1+w)(w+\alpha)(w+1-\alpha),
\end{equation*}
or equivalently 
\begin{equation*}
(w+\alpha)(w+1-\alpha)(tw(1+w) + 1-\alpha) = \alpha(1-\alpha)\left(\alpha- \frac{1}{2}\right).
\end{equation*}
In particular, if $\alpha =1/2$ then one retrieves the roots of $tw(1+w) + (1/2)$ (\cite{Gro-Mat}, p. 561). More generally, noting that $(w+\alpha)(w+1-\alpha) = w(1+w) + \alpha(1-\alpha)$, we can write this polynomial equation as 
\begin{equation}\label{PolyEquation}
t[Z(w)]^2 +(1-\alpha)(1+\alpha t)Z(w) + \frac{\alpha(1-\alpha)}{2} = 0, 
\end{equation}
where we set: 
\begin{equation*}
Z(w):= w(w+1) = \left(w+\frac{1}{2}\right)^2 - \frac{1}{4}.
\end{equation*}
 The discriminant of this $Z$-polynomial is computed as 
\begin{equation*}
\Delta_t^{(\alpha)} := (1-\alpha)[(1-\alpha)(1+\alpha t)^2 - 2\alpha t] 
\end{equation*}
and may take positive and negative values when $\alpha > 1/2$ (it is non negative otherwise). Actually, the roots of $\Delta_t^{(\alpha)}$ (as a polynomial in the time variable $t$) are given by $t_0 = t_0(\alpha) \leq 2 \leq t_1 = t_1(\alpha)$ displayed in \eqref{Times}. 
Consequently, the roots of the polynomial equation \eqref{PolyEquation} are negative real on $(0,t_0(\alpha)] \cup [t_1(\alpha), +\infty)$ and conjugate complex numbers with negative real part on $(t_0(\alpha), t_1(\alpha))$.
 
\begin{enumerate}
\item First case: $t \in [t_1(\alpha), +\infty)$ and the real roots are given by: 
\begin{equation*}
Z_{\pm}^{(\alpha)}(t) := \frac{-(1-\alpha)(1+\alpha t) \pm \sqrt{\Delta_t^{(\alpha)}}}{2t}.
\end{equation*}
Since $t \geq t_1(\alpha) \geq 2$ and since $4\alpha(1-\alpha) \leq 1$ then 
\begin{equation*}
\frac{t}{2} - (1-\alpha)(1+\alpha t) = \frac{t(1-2\alpha(1-\alpha)) - 2(1-\alpha)}{2} > 0 
\end{equation*}
holds true, whence we infer that $-1/4 \leq Z_{+}^{(\alpha)}(t)$. As to $Z_{-}^{(\alpha)}(t)$, we compute 
\begin{align*}
Z_{-}^{(\alpha)}(t) + \frac{1}{4} &= \frac{t-2(1-\alpha)(1+\alpha t) - 2\sqrt{\Delta_t^{(\alpha)}}}{4t}
\\& = \frac{[t-2(1-\alpha)(1+\alpha t)]^2 - 4\Delta_t^{(\alpha)}}{4t[t-2(1-\alpha)(1+\alpha t) + \sqrt{\Delta_t^{(\alpha)}}]}
\\& = \frac{t^2 - 4(1-\alpha)+ 4\alpha(1-\alpha)t}{4t[t-2(1-\alpha)(1+\alpha t) + \sqrt{\Delta_t^{(\alpha)}}]}
\\& =  \frac{t^2 + 4(1-\alpha)(\alpha t-1)}{4t[t-2(1-\alpha)(1+\alpha t) + \sqrt{\Delta_t^{(\alpha)}}]},
\end{align*}
which is positive since $\alpha t > 2\alpha \geq 1$. Accordingly, $Z_{-}^{(\alpha)}(t) > -1/4$ as well and in turn there are four real critical points of $\phi_t$ given by: 
\begin{equation*}
w_{\pm, \pm}^{(\alpha)}(t) = -\frac{1}{2} \pm \sqrt{\frac{1}{4} + Z_{\pm}^{(\alpha)}(t)}.
 \end{equation*}
Furthermore, it is obvious that $w_{+,\pm}^{(\alpha)} \in (-\alpha, 0)$ since $\alpha > 1/2$ and $Z_{\pm}^{(\alpha)}(t)$ are negative real numbers.  
However, the inequality 
\begin{equation*}
2t\alpha(1-\alpha) \geq  (1-\alpha)(1+\alpha t) + \sqrt{\Delta},
\end{equation*}
valid for any $\alpha \geq 1/2$ shows that the critical point 
\begin{equation*}
w_{-,-}^{(\alpha)}(t) =  -\frac{1}{2} - \sqrt{\frac{1}{4} + Z_{-}^{(\alpha)}(t)} \leq -\alpha,
\end{equation*}
and in turn $w_{-,+}^{(\alpha)}(t) <  w_{-,-}^{(\alpha)}(t) < -\alpha$ as well. In a nutshell, both critical points $w_{-,\pm}$  lie outside the open disc of radius $\alpha$ while $w_{+,\pm}$ lie inside it. 

\item Second case: $t \in (0,t_0(\alpha)]$ where we recall from \eqref{Times} the expression: 
\begin{equation*}
t_0(\alpha) = \frac{\alpha -\sqrt{2\alpha-1}}{\alpha(1-\alpha)} = \frac{1-\alpha}{\alpha(\alpha+\sqrt{2\alpha-1})}.
\end{equation*}
In this case, the inequality $-1/4 > Z_{+}^{(\alpha)}(t)$ holds true. Indeed, 
\begin{equation*}
t - 2(1-\alpha)(1+\alpha t)  = t(1-2\alpha + 2\alpha^2) - 2(1-\alpha) \leq 0 
\end{equation*}
for any $t \in [0, t_0(\alpha))$ and 
\begin{align*}
4\Delta - [t(1-2\alpha+2\alpha^2) - 2(1-\alpha)]^2  = -(2\alpha-1)^2 t + 4(1-\alpha)(1-2\alpha) < 0. 
\end{align*}
As a result, $Z_{-}^{(\alpha)}(t) \leq Z_{+}^{(\alpha)}(t) < -1/4$ so that $w_{\pm,+}$ and $w_{\pm,-}$ are two pairs of complex conjugate numbers whose real part equals $-1/2$. 
\item Third case: $t \in (t_0(\alpha), t_1(\alpha))$ where we also recall from \eqref{Times} the expression
\begin{equation*}
t_1(\alpha) = \frac{\alpha + \sqrt{2\alpha-1}}{\alpha(1-\alpha)} = \frac{1-\alpha}{\alpha(\alpha-\sqrt{2\alpha-1})}.
\end{equation*}
As mentioned above, $Z_{\pm}^{(\alpha)}(t)$ are complex conjugate and in turn there are four complex critical points $w_{\pm,\pm}^{(\alpha)}(t)$. Moreover, 
\begin{align*}
\frac{1}{4} - \frac{\alpha(1-\alpha)}{2} -\frac{1-\alpha}{2t_0(\alpha)} \leq \Re\left(\frac{1}{4} + Z_{\pm}^{(\alpha)}(t)\right)  \leq \frac{1}{4} - \frac{\alpha(1-\alpha)}{2} -\frac{1-\alpha}{2t_1(\alpha)}
\end{align*}
which reads after few computations: 
\begin{align*}
\frac{1}{4} - \frac{\alpha}{2}[1+\sqrt{2\alpha-1}] \leq \Re\left(\frac{1}{4} + Z_{\pm}^{(\alpha)}(t)\right)  \leq \frac{1}{4} - \frac{\alpha}{2}[1-\sqrt{2\alpha-1}].
\end{align*}
\end{enumerate}

\subsection{Asymptotic behavior of $a_n^{(\alpha)}(t)$ as $n \rightarrow +\infty$}
For sake of simplicity, we will only consider the case $t \in [t_1(\alpha), +\infty)$ since all the (four) real saddle points $w_{\pm,\pm}^{(\alpha)}(t)$ are real. In this respect, recall that  
$w_{-,\pm}^{(\alpha)}(t)$ lie outside the open disc of radius $\alpha$ while $w_{+,\pm}^{(\alpha)}(t)$ lie inside it. Using the saddle point method, we shall prove the following result which is the extension to $\alpha \geq 1/2$ of the asymptotic analysis performed in \cite{Gro-Mat}, p. 561: 
\begin{pro}
For any $\alpha \geq 1/2$ and $t \geq t_1(\alpha)$, one has 
\begin{equation*}
\left[\phi_t^{(\alpha)}\right]^{''}(w_{+,+}^{(\alpha)}(t)) < 0, \quad  \left[\phi_t^{(\alpha)}\right]^{''}(w_{+,-}^{(\alpha)}(t)) > 0,
\end{equation*}
and 
\begin{equation*}
a_n^{(\alpha)}(t) e^{-nt} \approx \frac{e^{-nt-n\Re[\phi_t^{(\alpha)}(w_{+,+}^{(\alpha)}(t))]}}{\sqrt{-2i\pi n \left[\phi_t^{(\alpha)}\right]^{''}(w_{+,+}^{(\alpha)}(t))}}
+\frac{e^{-nt-n\Re[\phi_t^{(\alpha)}(w_{+,-}^{(\alpha)}(t))]}}{\sqrt{2i\pi n |\left[\phi_t^{(\alpha)}\right]^{''}(w_{+,-}^{(\alpha)}(t))}}, \quad n \rightarrow +\infty.
\end{equation*}
Moreover, 
\begin{equation*}
t + \Re[\phi_t^{(\alpha)}(w_{+,\pm}^{(\alpha)}(t))] > 0.
\end{equation*}
In particular, the local inverse of $\chi_t^{(\alpha)}$ extends to an analytic function in the open unit disc. 
\end{pro}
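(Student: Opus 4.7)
The plan is to apply the saddle point method to the Cauchy integral \eqref{TCI} for $a_n^{(\alpha)}(t)$, deforming the contour $\gamma$ through the two real critical points $w_{+,\pm}^{(\alpha)}(t)$ that lie inside the disc of radius $\alpha$; the other two real critical points $w_{-,\pm}^{(\alpha)}(t)$ lie beyond $-\alpha$ and therefore play no role in the deformation.

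To prove the signs of $\left[\phi_t^{(\alpha)}\right]^{''}$ at $w_{+,\pm}^{(\alpha)}(t)$, I would first check that both saddles actually belong to the smaller subinterval $(\alpha-1,0)$. This amounts to $Z_{\pm}^{(\alpha)}(t) > -\alpha(1-\alpha)$, and short computations reduce these inequalities to $\sqrt{\Delta_t^{(\alpha)}} > (1-\alpha)|1-\alpha t|$, which is immediate under the standing assumption $t \geq t_1(\alpha) \geq 2$, $\alpha > 1/2$ (for $Z_+$ the right-hand side is non-positive since $\alpha t \geq 1$; for $Z_-$ one further uses $(1-\alpha) < 1/2$). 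On $(\alpha-1,0)$, the factors $1+w$, $w+1-\alpha$ and $w+\alpha$ are positive while $w$ is negative, so $\phi_t^{(\alpha)}$ differs from a real-analytic real-valued function only by the additive imaginary constant $-i\pi$, with limiting behavior
\begin{equation*}
\Re\left[\phi_t^{(\alpha)}\right](w) \to +\infty \text{ as } w \to (\alpha-1)^{+}, \qquad \Re\left[\phi_t^{(\alpha)}\right](w) \to -\infty \text{ as } w \to 0^{-},
\end{equation*}
due to the vanishing of $(w+1-\alpha)$ and of $w$ respectively inside the logarithm. A real smooth function on an open interval with such boundary values and exactly two interior critical points must have the smaller as a local minimum and the larger as a local maximum, forcing $\left[\phi_t^{(\alpha)}\right]^{''}(w_{+,-}^{(\alpha)}(t))>0$ and $\left[\phi_t^{(\alpha)}\right]^{''}(w_{+,+}^{(\alpha)}(t))<0$, since $\phi_t^{(\alpha)}$ is real along the real axis modulo this imaginary constant.

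The asymptotic expansion then follows from a standard steepest descent argument: deform $\gamma$ to a closed curve preserving its winding around the origin that passes horizontally through $w_{+,-}^{(\alpha)}(t)$ and vertically through $w_{+,+}^{(\alpha)}(t)$ along the respective steepest-descent directions, connected through arcs on which $\Re\left[\phi_t^{(\alpha)}\right]$ exceeds both saddle values, so that the bulk of the integral concentrates near the two saddles. Each saddle yields a Gaussian contribution to leading order, with the square-root branch in the denominator determined by the steepest-descent phase, which accounts for the opposite signs inside the two square roots. Some care is needed to keep the deformed contour clear of the logarithmic branch cut and of the zero of the integrand at $w=\alpha-1$, but neither contributes any extra residue.

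The chief remaining obstacle is the strict positivity $t + \Re\left[\phi_t^{(\alpha)}(w_{+,\pm}^{(\alpha)}(t))\right] > 0$, from which the analyticity in $\mathbb{D}$ is immediate: combining the above asymptotic with the identity $[\chi_t^{(\alpha)}]^{-1}(u) = \sum_n a_n^{(\alpha)}(t)(e^{-t}u)^n$ yields $\limsup_n |a_n^{(\alpha)}(t) e^{-nt}|^{1/n} = \exp\{-(t + \Re\phi_t^{(\alpha)}(w_{+,-}^{(\alpha)}(t)))\} < 1$, the dominant (smaller-$\Re\phi$) saddle being $w_{+,-}^{(\alpha)}(t)$ by the variation argument above, so the radius of convergence is strictly larger than $1$. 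To establish this positivity, I would use the critical equation $[\phi_t^{(\alpha)}]^{'}(w_{+,\pm})=0$ to solve for $t$ as an explicit rational function of the saddle position $w$, substitute back into $t + \Re\left[\phi_t^{(\alpha)}(w)\right]$ to obtain a one-variable expression on $(\alpha-1,0)$, and verify its positivity by monitoring monotonicity in $t$ along the branch of saddles on $[t_1(\alpha),+\infty)$ together with the boundary case $t = t_1(\alpha)$; the degenerate case $\alpha = 1/2$ recovers the analysis of \cite{Gro-Mat}. This last step is the most delicate piece, being a concrete real-variable inequality that must be shown to hold uniformly in $\alpha \geq 1/2$.
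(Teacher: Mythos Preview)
Your overall route is sound and in two places cleaner than the paper's, but there is one algebraic slip to fix.

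\textbf{The slip.} Your reduction of $Z_\pm^{(\alpha)}(t) > -\alpha(1-\alpha)$ to the single inequality $\sqrt{\Delta_t^{(\alpha)}} > (1-\alpha)|1-\alpha t|$ is wrong for $Z_-$. For $Z_+$ one gets $\sqrt{\Delta_t^{(\alpha)}} > (1-\alpha)(1-\alpha t)$, whose right side is non-positive since $\alpha t \geq 1$, so that case is trivial as you say. But for $Z_-$ the inequality goes the \emph{other} way: one must show $\sqrt{\Delta_t^{(\alpha)}} \leq (1-\alpha)(\alpha t -1)$, and this follows from the identity $(1-\alpha)^2(\alpha t -1)^2 - \Delta_t^{(\alpha)} = 2\alpha(1-\alpha)(2\alpha-1)t \geq 0$. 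The paper proves exactly this (in the equivalent guise $2t\alpha(1-\alpha) \geq (1-\alpha)(1+\alpha t) + \sqrt{\Delta}$) when locating $w_{-,-}^{(\alpha)}(t) \leq -\alpha$, which is the same statement by symmetry about $-1/2$. So your conclusion $w_{+,\pm}\in(\alpha-1,0)$ is correct; only the intermediate bookkeeping needs repair.

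\textbf{Signs of the second derivative.} Your variation argument is a genuine alternative to the paper's. The paper computes $[\phi_t^{(\alpha)}]''$ explicitly (equation \eqref{SecDer}) and then, via two technical lemmas exploiting the monotonicity of $t \mapsto Z_\pm^{(\alpha)}(t)$, proves the algebraic inequalities $[1-\alpha+w_{+,\pm}]^2[\alpha+w_{+,\pm}]^2 \gtrless (2\alpha-1)[1+w_{+,\pm}]^2[w_{+,\pm}]^2$. Your argument---$\Re\phi_t^{(\alpha)}$ is real-valued on $(\alpha-1,0)$, tends to $+\infty$ at the left end and $-\infty$ at the right, and has exactly the two critical points $w_{+,-}<w_{+,+}$ there, so they are a local minimum and a local maximum in that order---is shorter and bypasses those computations entirely.

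\textbf{Positivity $t + \Re\phi_t^{(\alpha)}(w_{+,\pm}) > 0$.} Your proposed strategy (monotonicity in $t$, reduction to $t=t_1(\alpha)$) is what the paper does for $w_{+,+}$, but for $w_{+,-}$ the paper takes a different path: it fixes $t$, proves $\alpha \mapsto Z_-^{(\alpha)}(t)$ is increasing on $[1/2,\alpha(t)]$, and reduces to $\alpha=1/2$ where the bound is exactly zero. Your route in fact works uniformly for both saddles once you exploit the critical-point condition directly: since $\partial_w\phi_t^{(\alpha)}(w_{+,\pm})=0$, the chain rule gives
\[
\frac{d}{dt}\Bigl[t + \Re\phi_t^{(\alpha)}\bigl(w_{+,\pm}(t)\bigr)\Bigr] = 1 + 2w_{+,\pm}(t) = U_\pm^{(\alpha)}(t) \geq 0,
\]
so both quantities are non-decreasing in $t$ and it suffices to check the common value at $t=t_1(\alpha)$, where $w_{+,+}=w_{+,-}$; the paper verifies this boundary value is positive for $\alpha>1/2$ by the substitution $x=\sqrt{2\alpha-1}$. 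This envelope observation makes the ``most delicate piece'' you flagged quite short, and is simpler than the paper's two-pronged argument.
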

\begin{proof}
By the virtue of the Cauchy integral \eqref{TCI}, we shall discard the critical points $w_{-,\pm}^{(\alpha)}(t)$ of $\phi_t^{(\alpha)}$ and only focus on the contributions of $w_{+,\pm}^{(\alpha)}(t)$.
Near these points, the Taylor expansion of $\phi_t^{(\alpha)}$ takes the form: 
\begin{align*}
\phi_t^{(\alpha)}(z) & = \phi_t^{(\alpha)}(w_{+,\pm}^{(\alpha)}(t))+\frac{1}{2}{\phi_t^{(\alpha)}}''(w_{+,\pm}^{(\alpha)}(t))(z-w_{+,\pm}^{(\alpha)}(t))^2+\dots
\end{align*}
and we compute 
\begin{align}\label{SecDer}
\left[\phi_t^{(\alpha)}\right]''(w_{+,\pm}^{(\alpha)}(t)) & = \frac{1}{[1+w_{+,\pm}^{(\alpha)}(t)]^2} - \frac{1}{[w_{+,\pm}^{(\alpha)}(t)]^2} + \frac{1}{[1-\alpha +w_{+,\pm}^{(\alpha)}(t)]^2}-\frac{1}{[\alpha +w_{+,\pm}^{(\alpha)}(t)]^2} \nonumber 
\\& = -\frac{1+2w_{+,\pm}^{(\alpha)}(t)}{[1+w_{+,\pm}^{(\alpha)}(t)]^2[w_{+,\pm}^{(\alpha)}(t)]^2} + \frac{(2\alpha-1)(1+2w_{+,\pm}^{(\alpha))}(t))}{[1-\alpha +w_{+,\pm}^{(\alpha)}(t)]^2[\alpha +w_{+,\pm}^{(\alpha)}(t)]^2}.  
\end{align}
The rest of the proof is technical and long, and for ease of reading, we shall divide it into several lemmas. We start by proving that $\left[\phi_t^{(\alpha)}\right]''(w_{+,+}^{(\alpha)}(t)) < 0$. 
Since $1+2w_{+,+}^{(\alpha))}(t) \geq $ and since $2\alpha -1 \in [0,1]$, it only suffices to prove that:

 \begin{lem}
For any $\alpha \geq 1/2$ and any $t \geq t_1(\alpha)$, we have 
\begin{equation*}
1-\alpha +w_{+,+}^{(\alpha)}(t) \geq 0, 
\end{equation*}
and 
\begin{equation*}
[1-\alpha +w_{+,+}^{(\alpha)}(t)]^2[\alpha +w_{+,+}^{(\alpha)}(t)]^2 \geq (2\alpha-1)[1+w_{+,+}^{(\alpha)}(t)]^2[w_{+,+}^{(\alpha)}(t)]^2. 
\end{equation*}
\end{lem}
\begin{proof}
The first inequality is equivalent to 
\begin{equation*}
\frac{1}{2} + \sqrt{\frac{1}{4} + Z_{+}^{(\alpha)}(t)} > \alpha \quad \Leftrightarrow \quad w_{-,+}^{(\alpha)}(t) < -\alpha, 
\end{equation*}
which we already proved in the previous paragraph. Taking the square root of both sides in the second inequality and keeping in mind that $w_{+,+}^{(\alpha)}(t) \in [-1/2,0)$, we readily see that it is equivalent to 
\begin{equation*}
(1+\sqrt{2\alpha-1})\left[\left(w_{+,+}^{(\alpha)}(t) +\frac{1}{2}\right)^2 -\frac{1}{4}\right] +\alpha(1-\alpha)  \geq 0. 
\end{equation*}
Keeping in mind the expression of $Z_{+}^{(\alpha)}(t)$, the last inequality may be further rewritten as: 
\begin{equation*}
(1+\sqrt{2\alpha-1})Z_{+}^{(\alpha)}(t) + \alpha(1-\alpha) \geq 0.
\end{equation*}
Now, we shall prove that $t \mapsto -Z_{+}^{(\alpha)}(t)$ is decreasing. To this end, write $-2Z_{+}^{(\alpha)}(t)$ as 
\begin{equation*}
\alpha(1-\alpha) + \frac{1-\alpha}{t} - \frac{\sqrt{\Delta_t^{(\alpha)}}}{t} 
\end{equation*}
so that it suffices to prove that the map $t \mapsto \Delta_t^{(\alpha)}/t^2$ is increasing. In this respect, quick computations show that 
\begin{equation*}
\partial_t \frac{\Delta_t^{(\alpha)}}{t^2}  = 2(1-\alpha)\frac{\alpha^2 t -(1-\alpha)}{t^3} > 0 ,\quad  t\geq t_1(\alpha) > 2. 
\end{equation*}
As a result, we have the upper bound 
\begin{align*}
-Z_{+}^{(\alpha)}(t) \leq -Z_{+}^{(\alpha)}(t_1(\alpha)) & = \frac{(1-\alpha)(1+\alpha t_1(\alpha))}{2t_1(\alpha)} 
\\& = \alpha(1-\alpha) \frac{1+\sqrt{2\alpha-1}}{2\alpha + 2\sqrt{2\alpha-1}}
\\& = \frac{\alpha(1-\alpha)}{1+\sqrt{2\alpha-1}},
\end{align*}
which is equivalent to $(1+\sqrt{2\alpha-1})Z_{+}^{(\alpha)}(t) + \alpha(1-\alpha) \geq 0$. The lemma is proved.
\end{proof}

Regarding $w_{+,-}^{(\alpha)}(t)$, the counterpart of the previous lemma is as follows:
\begin{lem}
For any $\alpha \geq 1/2$ and any $t \geq t_1(\alpha)$, we have 
\begin{equation*}
1-\alpha +w_{+,-}^{(\alpha)}(t) \geq 0, 
\end{equation*}
and 
\begin{equation*}
[1-\alpha +w_{+,-}^{(\alpha)}(t)]^2[\alpha +w_{+,-}^{(\alpha)}(t)]^2 \leq (2\alpha-1)[1+w_{+,-}^{(\alpha)}(t)]^2[w_{+,-}^{(\alpha)}(t)]^2. 
\end{equation*}
In particular, $\left[\phi_t^{(\alpha)}\right]''(w_{+,-}^{(\alpha)}(t)) > 0$.
\end{lem}
\begin{proof}
We proceed along the same lines of the previous lemma. The first inequality is equivalent to 
\begin{equation*}
\frac{1}{2} + \sqrt{\frac{1}{4} + Z_{-}^{(\alpha)}(t)} > \alpha \quad \Leftrightarrow \quad w_{-,-}^{(\alpha)}(t) < -\alpha, 
\end{equation*}
which we also proved in the previous paragraph. Taking the square root of both sides in the second inequality and keeping in mind that $w_{+,-}^{(\alpha)}(t) \in [-1/2,0)$, we readily see that it is equivalent to 
\begin{equation*}
(1+\sqrt{2\alpha-1})\left[\left(w_{+,-}^{(\alpha)}(t) +\frac{1}{2}\right)^2 -\frac{1}{4}\right] +\alpha(1-\alpha)  = (1+\sqrt{2\alpha-1})Z_{-}^{(\alpha)}(t) + \alpha(1-\alpha) \leq 0. 
\end{equation*}
Now, we shall prove that $t \mapsto -Z_{-}^{(\alpha)}(t)$ is increasing. To proceed, we write
\begin{equation*}
-Z_{-}^{(\alpha)}(t) = \frac{\alpha(1-\alpha)}{-2tZ_{+}^{(\alpha)}(t)} = \frac{\alpha(1-\alpha)}{(1-\alpha)(1+\alpha t) - \sqrt{\Delta_t^{(\alpha)}}}
\end{equation*}
and it only remains to prove that the denominator is decreasing. In this respect, we compute its derivative with respect to the variable $t$: 
\begin{align*}
(1-\alpha)\left[\alpha - \frac{\alpha^2[(1-\alpha)t -1]}{\sqrt{\Delta_t^{(\alpha)}}}\right] = \alpha(1-\alpha)\left[ \frac{\sqrt{\Delta_t^{(\alpha)}}- \alpha[(1-\alpha)t -1]}{\sqrt{\Delta_t^{(\alpha)}}}\right].
\end{align*}
But it is easy to see that 
\begin{equation*}
\Delta_t^{(\alpha)}- \alpha^2[(1-\alpha)t -1]^2 = (1-\alpha)^2 - \alpha^2 \geq  0 
\end{equation*}
which implies that $\sqrt{\Delta_t^{(\alpha)}}\geq \alpha[(1-\alpha)t +1$ since $t \geq t_1(\alpha) > 1/(1-\alpha)$. Consequently, $t \mapsto -Z_{-}^{(\alpha)}(t)$ is increasing therefore:
\begin{align*}
-Z_{-}^{(\alpha)}(t) \geq -Z_{-}^{(\alpha)}(t_1(\alpha))  = \frac{\alpha(1-\alpha)}{1+\sqrt{2\alpha-1}}.
\end{align*}
Remembering \eqref{SecDer}, we infer that $\left[\phi_t^{(\alpha)}\right]''(w_{+,-}^{(\alpha)}(t)) > 0$ which finishes the proof. 
\end{proof}

Now, since $\left[\phi_t^{(\alpha)}\right]''(w_{+,+}^{(\alpha)}(t)) < 0$ (respectively, $\left[\phi_t^{(\alpha)}\right]''(w_{+,-}^{(\alpha)}(t)) > 0$), we must choose $\gamma$ such that the critical direction at $w_{+,+}^{(\alpha)}(t)$, given by $-\frac{\pi}{2} \pm \frac{\pi}{2}$, makes an angle less than $\frac{\pi}{4}$ with the tangent to the path $\gamma$ (respectively, the critical direction at $w_{+,-}^{(\alpha)}(t)$, given by $\pm \frac{\pi}{2}$, makes an angle less than $\frac{\pi}{4}$ with the tangent to the path $\gamma$).

For simplicity, we assume that $\gamma$ is chosen inside the open disc of radius $\alpha$ such that:
\begin{itemize}
\item The angle of the tangent to the path $\gamma$ at $w_{+,+}$ is less than $\frac{\pi}{4}$.
\item The angle of the tangent to the path $\gamma$ at $w_{+,-}$ makes an angle less than $\frac{\pi}{4}$ with the critical direction $-\frac{\pi}{2}$.
\end{itemize}

This ensures that, asymptotically as $n \rightarrow +\infty$, $a_n^{(\alpha)}(t) e^{-nt}$ is determined by contributions in the neighborhoods of $w_{+,+}$ and $w_{+,-}$:
\begin{equation*}
a_n^{(\alpha)}(t) e^{-nt} \approx \frac{e^{-nt-n\phi_t^{(\alpha)}(w_{+,+}^{(\alpha)}(t))}}{\sqrt{-2i\pi n \left[\phi_t^{(\alpha)}\right]''(w_{+,+}^{(\alpha)}(t))}}
+ \frac{e^{-nt-n\phi_t^{(\alpha)}(w_{+,-}^{(\alpha)}(t))}}{\sqrt{2i\pi n \left|\left[\phi_t^{(\alpha)}\right]''(w_{+,-}^{(\alpha)}(t))\right|}}, \quad n \to +\infty.
\end{equation*}
Since $w_{+,\pm}^{(\alpha)}(t) < 0$ then 
\begin{align*}
e^{-n\phi_t^{(\alpha)}(w_{+,\pm}^{(\alpha)}(t))} & = (-1)^n e^{-n\Re[\phi_t^{(\alpha)}(w_{+,\pm}^{(\alpha)}(t))]}
\end{align*}
where 
\begin{equation*}
\Re[\phi_t^{(\alpha)}(w_{+,\pm}^{(\alpha)}(t))] = \log\left\{\frac{[1+w_{+,\pm}^{(\alpha)}(t)][1-\alpha + w_{+,\pm}^{(\alpha)}(t)]}{[-w_{+,\pm}^{(\alpha)}(t)][\alpha + w_{+,\pm}^{(\alpha)}(t)]}\right\}
\end{equation*}
and the branch of the logarithm is now the real one. The remainder of the proof is then devoted to prove the two following inequalities:
\begin{equation*}
t + \Re[\phi_t^{(\alpha)}(w_{+,\pm}^{(\alpha)}(t))] > 0, \quad t \geq t_1(\alpha),
\end{equation*}
demonstrating the exponential decay of the Taylor coefficients of the (local) inverse of $\chi_t^{(\alpha)}$. We first prove that:
\begin{lem}
For any $\alpha > 1/2$ and any $t \geq t_1(\alpha)$, $t+ \Re[\phi_t^{(\alpha)}(w_{+,+}^{(\alpha)}(t))] > 0$.
 \end{lem}
\begin{proof}
Since $w_{+,+}^{(\alpha)}(t) < 0$, 
\begin{align*}
t + \Re[\phi_t^{(\alpha)}(w_{+,+}^{(\alpha)}(t))] & = t[1+2w_{+,+}^{(\alpha)}(t)] - \log\left\{\frac{[1+w_{+,+}^{(\alpha)}(t)][1-\alpha + w_{+,+}^{(\alpha)}(t)]}{[-w_{+,+}^{(\alpha)}(t)][\alpha + w_{+,+}^{(\alpha)}(t)]}\right\}
\\& = t\sqrt{1 + 4Z_{+}^{(\alpha)}(t)} - \log\left\{\frac{[1+\sqrt{1+4 Z_{+}^{(\alpha)}(t)}][\sqrt{1+4 Z_{+}^{(\alpha)}(t)}-(2\alpha-1)}{[1-\sqrt{1+4 Z_{+}^{(\alpha)}(t)}][\sqrt{1+4 Z_{+}^{(\alpha)}(t)}+(2\alpha-1)]}\right\}.
\end{align*}
Set 
\begin{equation*}
U_{+}^{(\alpha)}(t) := \sqrt{1+4 Z_{+}^{(\alpha)}(t)}
\end{equation*}
so that 
\begin{align*}
t + \Re[\phi_t^{(\alpha)}(w_{+,+}^{(\alpha)}(t))] = tU_{+}^{(\alpha)}(t)  - \log\left\{\frac{[1+ U_{+}^{(\alpha)}(t)][U_{+}^{(\alpha)}(t) -(2\alpha-1)]}{[1-U_{+}^{(\alpha)}(t) ][U_{+}^{(\alpha)}(t) +(2\alpha-1)]}\right\}.
\end{align*}
Noticing that 
\begin{align*}
\log\left\{\frac{[U_{+}^{(\alpha)}(t) -(2\alpha-1)]}{[U_{+}^{(\alpha)}(t) +(2\alpha-1)]}\right\} \geq 0,
\end{align*}
then we shall prove that the map 
\begin{align*}
t \mapsto tU_{+}^{(\alpha)}(t)  - \log\left\{\frac{1+ U_{+}^{(\alpha)}(t)}{1-U_{+}^{(\alpha)}(t)}\right\}, \quad t \geq t_1(\alpha),
\end{align*}
is increasing. To this end, recall that $t \mapsto Z_+^{(\alpha)}(t)$ (and in turn $t \mapsto U_+^{(\alpha)}(t)$) is increasing so that 
\begin{align*}
\partial_t\left[ tU_{+}^{(\alpha)}(t)  - \log\left\{\frac{[1+ U_{+}^{(\alpha)}(t)]}{[1-U_{+}^{(\alpha)}(t) ]}\right\} \right] & = U_{+}^{(\alpha)}(t) + \partial_t U_{+}^{(\alpha)}(t) \left[t - \frac{2}{1-[U_{+}^{(\alpha)}(t)]^2} \right] 
\\& = U_{+}^{(\alpha)}(t) +  \partial_t U_{+}^{(\alpha)}(t) \left[t + \frac{1}{2Z_{+}^{(\alpha)}(t)} \right] 
\\&  = U_{+}^{(\alpha)}(t) + t\left[\partial_t U_{+}^{(\alpha)}(t)\right] \frac{\sqrt{\Delta_t^{(\alpha)}}  - (1-\alpha)(1+\alpha t) + 1}{\sqrt{\Delta_t^{(\alpha)}}  - (1-\alpha)(1+\alpha t)} \geq 0. 
\end{align*}
Consequently, for any $t \geq t_1(\alpha)$, 
\begin{equation*}
tU_{+}^{(\alpha)}(t)  - \log\left\{\frac{[1+ U_{+}^{(\alpha)}(t)]}{[1-U_{+}^{(\alpha)}(t) ]}\right\} \geq t_1(\alpha)U_{+}^{(\alpha)}(t_1(\alpha))  - \log\left\{\frac{[1+ U_{+}^{(\alpha)}(t_1(\alpha))]}{[1-U_{+}^{(\alpha)}(t_1(\alpha))]}\right\}.
\end{equation*}
Now, recall also the expression: 
\begin{equation*}
Z_{+}^{(\alpha)}(t_1(\alpha)) = -\frac{\alpha(1-\alpha)}{1+\sqrt{2\alpha-1}},
\end{equation*}
whence we deduce that 
\begin{align*}
U_{+}^{(\alpha)}(t_1(\alpha)) = \sqrt{1+ 4Z_{+}^{(\alpha)}(t_1(\alpha))} = \sqrt{\frac{(1-2\alpha)^2 + \sqrt{2\alpha-1}}{1+\sqrt{2\alpha-1}}}.
\end{align*}
Besides, 
\begin{align*}
t_1(\alpha) = \frac{\alpha + \sqrt{2\alpha-1}}{\alpha(1-\alpha)} = 4\frac{[\alpha + \sqrt{2\alpha-1}]}{1-(1-2\alpha)^2} = 2 \frac{(1+\sqrt{2\alpha-1})^2}{1-(1-2\alpha)^2}. 
\end{align*}
Denoting $x := \sqrt{2\alpha-1} \in [0,1]$, we can write 
\begin{eqnarray*}
U_{+}^{(\alpha)}(t_1(\alpha)) & = & \sqrt{\frac{x^4 + x}{1+x}} = \sqrt{\frac{x^4 + x}{1+x}} = \sqrt{x-x^2+x^3}:= r(x), \\ 
t_1(\alpha) & = & 2 \frac{(1+x)^2}{1-x^4} = 2 \frac{(1+x)}{(1-x)(1+x^2)} = 2\frac{(1+x)}{1-(r(x))^2}.
\end{eqnarray*}
We are then led to prove that 
\begin{equation*}
2\frac{(1+x)r(x)}{1-(r(x))^2} - \ln\left[\frac{1+r(x)}{1-r(x)}\right] \geq 0, x \in [0,1].
\end{equation*}
This can be readily proved by proving that the map 
\begin{equation*}
y \mapsto \frac{2y}{1-y^2} - \ln\left[\frac{1+y}{1-y}\right], \quad y \in [0,1], 
\end{equation*}
is increasing and takes the zero value at $y=0$. The lemma is proved.
\end{proof}

In order to prove that $t+ \Re[\phi_t^{(\alpha)}(w_{+,-}^{(\alpha)}(t))] > 0$, we need the following lemma:
\begin{lem}\label{variations of Z}
For any $t \geq 2$, let $\alpha(t)$ be the unique real number in $[1/2,1]$ such that $t_1(\alpha(t)) = t$. Then $\alpha\mapsto Z_{-}^{(\alpha)}(t)$ is increasing on $[1/2,\alpha(t)]$. 
\end{lem}
\begin{proof}
The uniqueness of $\alpha(t)$ follows from the fact that $\alpha \mapsto t_1(\alpha)$ is increasing from $[1/2,1]$ onto $[2, +\infty)$. Now, recall the expression:
\begin{equation*}
Z_{-}^{(\alpha)}(t) = \frac{(1-\alpha)(1+\alpha t) + \sqrt{\Delta_t^{(\alpha)}}}{-2t}. 
\end{equation*}
so that it only remains to prove that the numerator is decreasing on $[1/2,\alpha(t)]$. In this respect, we compute its derivative with respect to the variable $\alpha$: 
\begin{align*}
-1-t(2\alpha-1)+\frac{-2(1+t(2\alpha-1))(1-\alpha)(1+\alpha t)-2t(1-2\alpha)}{2\sqrt{\Delta_t^{(\alpha)}}},
\end{align*}
which may be written as
\begin{align*}
-\frac{\left[\sqrt{\Delta_t^{(\alpha)}} + (1-\alpha)(1+\alpha t)\right]+t(2\alpha-1)\left[\sqrt{\Delta_t^{(\alpha)}} +(1-\alpha)(1+\alpha t)-1\right]}{\sqrt{\Delta_t^{(\alpha)}}}.
\end{align*}
Since $t = t_1(\alpha(t))$ and $\alpha \in [1/2, \alpha(t)]$ then $t \geq t_1(\alpha) \geq 1/(1-\alpha)$ which implies that 
\begin{equation*}
(1-\alpha)(1+\alpha t)-1 = \alpha [(1-\alpha)t - 1] \geq 0
\end{equation*}
and proves the lemma. 
\end{proof}


\begin{cor}
For any $\alpha \geq 1/2$ and any $t \geq t_1(\alpha)$, $t+ \Re[\phi_t^{(\alpha)}(w_{+,-}^{(\alpha)}(t))] > 0$.
\end{cor}
\begin{proof}
The proof is similar to the one written for proving $t+ \phi_t^{(\alpha)}(w_{+,+}^{(\alpha)}(t)) > 0$. Indeed, 
\begin{align*}
t + \Re[\phi_t^{(\alpha)}(w_{+,-}^{(\alpha)}(t))] = tU_{-}^{(\alpha)}(t)  - \log\left\{\frac{[1+ U_{-}^{(\alpha)}(t)][U_{-}^{(\alpha)}(t) -(2\alpha-1)]}{[1-U_{-}^{(\alpha)}(t) ][U_{-}^{(\alpha)}(t) +(2\alpha-1)]}\right\},
\end{align*}
and it suffices to prove that 
\begin{equation*}
 tU_{-}^{(\alpha)}(t)  - \log\left\{\frac{[1+ U_{-}^{(\alpha)}(t)]}{[1-U_{-}^{(\alpha)}(t) ]}\right\} > 0, \quad t \geq t_1(\alpha). 
 \end{equation*}
To this end, notice that $t \geq t_1(\alpha) \Leftrightarrow \alpha \leq \alpha(t)$. Besides, we compute 
\begin{align*}
\partial_\alpha\left[ tU_{-}^{(\alpha)}(t)  - \log\left\{\frac{[1+ U_{-}^{(\alpha)}(t)]}{[1-U_{-}^{(\alpha)}(t) ]}\right\} \right] & =  \partial_\alpha U_{-}^{(\alpha)}(t) \left[t - \frac{2}{1-[U_{-}^{(\alpha)}(t)]^2} \right] 
\\& =  \partial_\alpha U_{-}^{(\alpha)}(t) \left[t + \frac{1}{2Z_{-}^{(\alpha)}(t)} \right] 
\\&  =  t\partial_\alpha U_{-}^{(\alpha)}(t) \frac{\sqrt{\Delta_t^{(\alpha)}}  + (1-\alpha)(1+\alpha t) - 1}{\sqrt{\Delta_t^{(\alpha)}}  + (1-\alpha)(1+\alpha t)}. 
\end{align*}
But the ratio 
\begin{equation*}
\frac{\sqrt{\Delta_t^{(\alpha)}}  + (1-\alpha)(1+\alpha t) - 1}{\sqrt{\Delta_t^{(\alpha)}}  + (1-\alpha)(1+\alpha t)},
\end{equation*}
is non negative, hence Lemma \eqref{variations of Z} implies that 
\begin{equation*}
\alpha \mapsto tU_{-}^{(\alpha)}(t)  - \log\left\{\frac{[1+ U_{-}^{(\alpha)}(t)]}{[1-U_{-}^{(\alpha)}(t) ]}\right\}  
\end{equation*}
is increasing on $[1/2, \alpha(t)]$. It follows that:
\begin{align*}
tU_{-}^{(\alpha)}(t)  - \log\left\{\frac{[1+ U_{-}^{(\alpha)}(t)]}{[1-U_{-}^{(\alpha)}(t) ]}\right\} \geq tU_{-}^{(1/2)}(t)  - \log\left\{\frac{[1+ U_{-}^{(1/2)}(t)]}{[1-U_{-}^{(1/2)}(t) ]}\right\} =0, 
\end{align*}
as required. 
\end{proof}
The proposition is now proved. 
\end{proof}

\section{Kunisky's identity: proof and dynamical extension} 
\subsection{Reminder}
The free MANOVA distribution, discovered by K. W. Wachter in the late seventies (\cite{Wac}), is the free multiplicative convolution of two free Bernoulli distributions of parameters $(\beta, \alpha)$ and has the following Lebesgue decomposition (\cite{Voi}, \cite{Aub}): 
\begin{equation*}
\nu^{(\beta,\alpha)}(dx) := \left(1-\min\left(\beta,\alpha\right)\right)\delta_0 + \max\left(0,\alpha + \beta -1\right)\delta_1 + \frac{\sqrt{(x_+-x)(x-x_-)}}{2x(1-x)}{\bf 1}_{[x_-,x_+]}(x)dx,
\end{equation*}
where
\begin{equation*}
x_{\pm} = x_{\pm}(\beta, \alpha) := \left(\sqrt{\alpha(1-\beta)} \pm \sqrt{\beta(1-\alpha)}\right)^2.
\end{equation*}

Equivalently, $\nu^{(\beta,\alpha)}$ is the spectral distribution of the angle operator of two free orthogonal projections which may be realized as $PUQU^{\star}P$, where $U$ is a Haar unitary operator which is free with $\{P,Q\}$ in $\mathscr{A}$. 
Considered as an operator in the compressed algebra $P\mathscr{A}P$, $PUQU^{\star}P$ is the weak (and strong) limit as $t \rightarrow +\infty$ of $J_t$ and as such, its spectral distribution is the weak limit of $\mu_t^{(\beta,\alpha)}$ as $t \rightarrow +\infty$
(see e.g. \cite{Dem}):  
\begin{equation*}
\mu_{\infty}^{(\beta,\alpha)}(dx) = \max\left(0, 1- \frac{\alpha}{\beta}\right)\delta_0 + \max\left(0,\frac{\alpha + \beta -1}{\beta}\right)\delta_1 + \frac{\sqrt{(x_+-x)(x-x_-)}}{2\beta x(1-x)}{\bf 1}_{[x_-,x_+]}(x)dx, 
\end{equation*}

In \cite{Kun}, Proposition C.2, it was observed that the density of $\nu^{(\alpha,\alpha)}$ normalized to have unit mass is the pushforward of the normalized density of $\nu^{(1/2, \alpha)}$ under the map $x \mapsto (2x-1)^2$. 
This observation was obtained in \cite{Kun} using asymptotic freeness and can be checked directly from the explicit expression of the density of $\nu^{(\beta, \alpha)}$ displayed above. 
Nonetheless, the author of \cite{Kun} raised the question whether an explanation of it relying on arguments from free probability theory may be provided. In the next paragraph, we provide such an explanation relying on the Nica-Speicher semi-group for the compression by a free orthogonal projection with rank $1/2$. We shall also need an identity we proved in \cite{Dem-Ham} and valid for any orthogonal projections without any freeness assumption.
As before, we assume without loss of generality that $\alpha \geq 1/2$ since otherwise we may consider $P-PUQU^{\star}P = PU({\bf 1}-Q)U^{\star}P$. 

\subsection{Another proof of Kunisky's identity}
Straightforward computations show that the densities of $\nu^{(1/2, \alpha)}$ and of $\mu_{\infty}^{(1/2, \alpha)}$ normalized to have unit mass coincide, and likewise for the normalized densities of 
$\nu^{(\alpha, \alpha)}$ and of $\mu_{\infty}^{(\alpha, \alpha)}$. Consequently, Proposition C.2. in \cite{Kun} may be reformulated as: 
\begin{pro}\label{KunBis}
The normalized density of $\mu_{\infty}^{(\alpha, \alpha)}$ is the pushforward of the normalized density of $\mu_{\infty}^{(1/2, \alpha)}$ under the map $x \mapsto (2x-1)^2$. 
\end{pro}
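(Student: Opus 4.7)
The plan is to furnish a free-probabilistic proof of Kunisky's identity by combining the Nica-Speicher semi-group for compression by a free projection of rank $1/2$ with an algebraic identity established in \cite{Dem-Ham} that holds for \emph{any} two orthogonal projections. As the author indicates, we may restrict to $\alpha \geq 1/2$. Let $U$ be a Haar unitary operator $\star$-free with $\{P, Q\}$, and set $S := 2UQU^{\star} - {\bf 1}$; this is a symmetry with $\tau(S) = 2\alpha - 1$ which is free from $P$. The elementary identity $2PUQU^{\star}P - P = PSP$ shows that the spectral distribution of $(PSP)^2$ in $(P\mathscr{A}P, 2\tau)$ is precisely the pushforward of $\mu_\infty^{(1/2,\alpha)}$ under $x \mapsto (2x-1)^2$, which reduces the problem to identifying the law of $(PSP)^2$.

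I would then invoke the Nica-Speicher semi-group: since $P$ is a projection of rank $1/2$ free from $S$, the compression $PSP$ in $(P\mathscr{A}P, 2\tau)$ has the same distribution as $(S_1 + S_2)/2$ in $(\mathscr{A}, \tau)$, where $S_1, S_2$ are free copies of $S$. Setting $Q_i := (S_i + {\bf 1})/2$, the $Q_i$ are free projections of trace $\alpha$ and
\begin{equation*}
(PSP)^2 \;\overset{d}{=}\; \Bigl(\frac{S_1 + S_2}{2}\Bigr)^2 \;=\; (Q_1 + Q_2 - {\bf 1})^2.
\end{equation*}

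The key algebraic identity from \cite{Dem-Ham}, which can be verified by direct expansion of both sides, reads
\begin{equation*}
(Q_1 + Q_2 - {\bf 1})^2 \;=\; Q_1 Q_2 Q_1 \;+\; ({\bf 1} - Q_1)({\bf 1} - Q_2)({\bf 1} - Q_1),
\end{equation*}
and the two summands live in the mutually annihilating corners $Q_1 \mathscr{A} Q_1$ and $({\bf 1}-Q_1)\mathscr{A}({\bf 1}-Q_1)$ respectively. Consequently, taking $\tau$-traces of powers, the distribution of $(Q_1 + Q_2 - {\bf 1})^2$ in $(\mathscr{A}, \tau)$ is the convex combination
\begin{equation*}
\alpha\, \mu_\infty^{(\alpha, \alpha)} \;+\; (1-\alpha)\, \mu_\infty^{(1-\alpha, 1-\alpha)},
\end{equation*}
where, by the freeness of $Q_1$ and $Q_2$, the laws of $Q_1 Q_2 Q_1$ in $(Q_1\mathscr{A}Q_1, \tau/\alpha)$ and of $({\bf 1}-Q_1)({\bf 1}-Q_2)({\bf 1}-Q_1)$ in $(({\bf 1}-Q_1)\mathscr{A}({\bf 1}-Q_1), \tau/(1-\alpha))$ are the Wachter distributions $\mu_\infty^{(\alpha,\alpha)}$ and $\mu_\infty^{(1-\alpha,1-\alpha)}$.

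To conclude at the level of normalized densities, I would note that the absolutely continuous parts of $\mu_\infty^{(\alpha, \alpha)}$ and $\mu_\infty^{(1-\alpha, 1-\alpha)}$ are both supported on $(0, 4\alpha(1-\alpha))$ and, reading off from the Wachter density recalled in Section 4.1, satisfy
\begin{equation*}
\alpha\, \frac{d\mu_\infty^{(\alpha,\alpha)}}{dx}(x) \;=\; (1-\alpha)\, \frac{d\mu_\infty^{(1-\alpha,1-\alpha)}}{dx}(x) \;=\; \frac{\sqrt{4\alpha(1-\alpha) - x}}{2\sqrt{x}(1-x)}.
\end{equation*}
Hence the two measures share the same normalized absolutely continuous density, and so does the convex combination above. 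Combined with the measure identity from the preceding paragraph, this yields the coincidence of the normalized density of $\mu_\infty^{(\alpha,\alpha)}$ with the pushforward of the normalized density of $\mu_\infty^{(1/2,\alpha)}$ under $(2x-1)^2$. The conceptual heart of the argument, and its main novelty over the direct density manipulation, is the orthogonal corner decomposition furnished by the \cite{Dem-Ham} identity: it explains \emph{why} two distinct Wachter measures assemble into a single pushforward, and the main obstacle is simply recognizing and exploiting this decomposition together with Nica-Speicher's compression formula in a unified setup.
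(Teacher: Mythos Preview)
Your proof is correct and uses the same two ingredients as the paper's: the Nica--Speicher compression formula for a free projection of rank $1/2$ and the identity from \cite{Dem-Ham}. The execution differs in one respect worth noting. The paper works at the level of moments and invokes the \emph{trace} identity $\tau[(Q_1+Q_2-{\bf 1})^{2j}] = 2\tau[(Q_1Q_2Q_1)^j] - (2\alpha-1)$ from \cite{Dem-Ham}, which already isolates $\tau[(Q_1Q_2Q_1)^j]$ and yields the moment matching between the two normalized densities directly, with no further appeal to the explicit Wachter formula. You instead use the equivalent operator-level identity $(Q_1+Q_2-{\bf 1})^2 = Q_1Q_2Q_1 + ({\bf 1}-Q_1)({\bf 1}-Q_2)({\bf 1}-Q_1)$ and the resulting corner decomposition, which gives the convex combination $\alpha\,\mu_\infty^{(\alpha,\alpha)} + (1-\alpha)\,\mu_\infty^{(1-\alpha,1-\alpha)}$; to conclude you must then check that the two summands share the same normalized density, and you do this by reading off the explicit Wachter density. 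That last step partially reintroduces the direct density computation the argument was meant to avoid, whereas the paper's trace identity sidesteps it. Both routes are valid and conceptually equivalent; the paper's is marginally more self-contained for the stated purpose.
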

\begin{proof}
Let $f_{\infty}^{(\beta, \alpha)}$ denote the (non normalized) density of $\mu_{\infty}^{(\beta, \alpha)}$. Then its mass equals $2(1-\alpha)$  (the weight of 
$\mu_{P\mathscr{A}P}^{(1/2, \alpha)}$ at $x=0$ vanishes since we assumed that $\alpha \geq 1/2$) so that:
\begin{align*}
\frac{1}{2(1-\alpha)} \int(2x-1)^{2j}f_{\infty}^{(1/2, \alpha)}(x) dx & = \frac{1}{2(1-\alpha)}\int(2x-1)^{2j} \mu_{\infty}^{(1/2, \alpha)}(dx) - \frac{2\alpha-1}{2(1-\alpha)}. 
\end{align*}
Now, recall from \cite{Nic-Spe} that the distribution of $PUQU^{\star}P$ in $(P\mathscr{A}P, 2\tau)$ coincides with that of $(Q_1+Q_2)/2$ in $(\mathscr{A}, \tau)$, where $Q_i, i \in \{1,2\},$ are two free copies of $Q$. Consequently, 
\begin{align*}
\frac{1}{2(1-\alpha)} \int(2x-1)^{2j} f_{\infty}^{(1/2, \alpha)}(x)dx & = \frac{1}{2(1-\alpha)}\tau[(Q_1+Q_2-1)^{2j}]- \frac{2\alpha-1}{2(1-\alpha)}. 
\end{align*}
At this level, we will not need any additional argument from free probability theory. Rather, we shall appeal to Theorem 1 in \cite{Dem-Ham} which is valid for any pair of orthogonal projections. An application of this theorem leads to the following identity:
\begin{equation*}
\tau[(Q_1+Q_2-1)^{2j} = 2\tau[(Q_1Q_2Q_1)^j] - (2\alpha-1).
\end{equation*}
As a result,
\begin{align*}
\frac{1}{2(1-\alpha)} \int(2x-1)^{2j}f_{\infty}^{(1/2, \alpha)}(x)dx & = \frac{1}{(1-\alpha)}\tau[(Q_1Q_2Q_1)^{j}]- \frac{2\alpha-1}{1-\alpha} 
\\& = \frac{\alpha}{(1-\alpha)}\left[\frac{\tau[(Q_1Q_2Q_1)^{j}}{\alpha} - \frac{2\alpha-1}{\alpha}  \right]
\\& = \frac{\alpha}{(1-\alpha)}\left[\int x^j \left(\mu_{\infty}^{(\alpha, \alpha)}- \frac{2\alpha-1}{\alpha}\delta_1\right)(dx) \right] 
\\& =  \frac{\alpha}{(1-\alpha)}\left[\int x^j f_{\infty}^{(\alpha, \alpha)}(x)dx\right],
\end{align*}
proving Proposition C.2. in \cite{Kun}.
\end{proof}
In the same paper, the author also asked whether other distributional identities among free MANOVA laws exist and we do not have any answer to this question for the time being. 
Indeed, the Nica-Speicher semi-group remains valid for $\tau(P) = 1/k, k \geq 2,$ while we do not know how Theorem 1 in \cite{Dem-Ham} extends to $k \geq 3$ free copies of $Q$. 
We would like also to point out that since the commutative prototype  of $\mu_{\infty}^{(\beta,\alpha)}$ is the beta distribution, Proposition C.2. in \cite{Kun} reminds the quadratic transformation relating Jacobi polynomials of parameters 
$(1/2, \lambda)$ to ultraspherical polynomials of parameter $\lambda)$ (see e.g. \cite{Koo}, eq. (2.3)). To the best of our knowledge, no higher order extensions of this transformation exist in literature.

\subsection{Extension to the free Jacobi process}
We now proceed to the dynamical extension of Proposition \ref{KunBis}. To this end, denote $G_t^{(1/2,\alpha)}$ the Cauchy-Stieltjes transform of $\mu_t^{(1/2,\alpha)}$ and recall the Cauchy-Stieltjes transform 
$G_t^{(\alpha)}$ of $\mu_t^{(\alpha,\alpha)}$. Denote further $k_{t}^{(1/2,\alpha)}$ and $k_{t}^{(\alpha, \alpha)}$ the normalized densities of these probability  distributions. In this respect, a major step toward the proof of Proposition \ref{Pr1} is the following result:

\begin{pro}\label{Pro1}
Let $h_{t}^{(1/2,\alpha)}$ be the pushforward of $k_{t}^{(1/2,\alpha)}$ under the map $x \mapsto 2x-1$ and assume it is an even function. Then the Cauchy-Stieltjes transforms of the pushforward of $h_{t/2}^{(1/2,\alpha)}$ under the map $x \mapsto x^2$ and 
of $k_{t}^{(\alpha, \alpha)}$ satisfy the same pde. 
 \end{pro}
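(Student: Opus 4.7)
The plan is to start from the transport-type pde satisfied by the Cauchy--Stieltjes transform $G_t^{(\beta,\alpha)}$ of $\mu_t^{(\beta,\alpha)}$ (the natural extension of Proposition 7.1 in \cite{Dem} to arbitrary ranks) and to propagate it through the chain of operations linking $k_{t/2}^{(1/2,\alpha)}$ to $k_t^{(\alpha,\alpha)}$. First I would specialize to $(\beta,\alpha)=(1/2,\alpha)$ and strip off the atomic contribution at $x=1$ (there is none at $x=0$ since we may assume $\alpha\ge 1/2$): its weight is constant in $t$, so subtracting the singular term $c/(z-1)$ from $G_t^{(1/2,\alpha)}$ and rescaling by the mass of the absolutely continuous part produces a pde of the same transport form for the Cauchy--Stieltjes transform $K_t(z)$ of $k_t^{(1/2,\alpha)}$.

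Next comes the change of variable. The affine map $x\mapsto 2x-1$ takes $K_t$ into $\mathcal{H}_t(z):=\tfrac{1}{2}K_t\!\left(\tfrac{z+1}{2}\right)$, the Cauchy--Stieltjes transform of $h_t^{(1/2,\alpha)}$, and the induced pde for $\mathcal{H}_t$ is obtained by a routine chain-rule computation. The evenness hypothesis on $h_t^{(1/2,\alpha)}$ translates into the anti-symmetry $\mathcal{H}_t(-z)=-\mathcal{H}_t(z)$, which is used crucially in the last step. Indeed, if $\mathcal{S}_t$ denotes the Cauchy--Stieltjes transform of the pushforward of $h_{t/2}^{(1/2,\alpha)}$ under $x\mapsto x^2$, the partial-fraction identity
\begin{equation*}
\frac{1}{z-x^2}=\frac{1}{2\sqrt{z}}\left(\frac{1}{\sqrt{z}-x}+\frac{1}{\sqrt{z}+x}\right)
\end{equation*}
combined with the anti-symmetry of $\mathcal{H}_{t/2}$ collapses the two resulting integrals into one and gives the closed form
\begin{equation*}
\mathcal{S}_t(z)=\frac{\mathcal{H}_{t/2}(\sqrt{z})}{\sqrt{z}}.
\end{equation*}
Substituting this into the pde for $\mathcal{H}_{t/2}$ and applying the chain rule (with the factor $1/2$ coming from the time rescaling and the factor $1/(2\sqrt{z})$ from the $z$-derivative) produces a pde for $\mathcal{S}_t$.

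The concluding step is to carry out the same atom-subtraction on the equal-rank pde for $G_t^{(\alpha,\alpha)}$ to obtain a pde for the Cauchy--Stieltjes transform of $k_t^{(\alpha,\alpha)}$, and then to verify that the two pdes coincide. The main obstacle --- and the only place where the specific value $\beta=1/2$ enters decisively --- is matching the linear term: in the $(\alpha,\alpha)$ equation it carries the coefficient $(1-2\alpha)z$ inherited from Proposition 7.1 of \cite{Dem}, whereas on the $(1/2,\alpha)$ side the analogous coefficient, initially involving $1/2-\alpha$, is reshaped by the translation, the quadratic pushforward and the time rescaling $t\mapsto t/2$. It is precisely the anti-symmetry of $\mathcal{H}_{t/2}$ that cancels the odd-in-$\sqrt{z}$ contributions which would otherwise obstruct this matching, after which the coincidence of the two pdes reduces to an algebraic identity in $\alpha$ and $z$, and the proposition follows.
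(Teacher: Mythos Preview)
Your proposal is correct and follows essentially the same route as the paper: start from the pde for $G_t^{(1/2,\alpha)}$, remove the atom at $x=1$ and normalize, push through the affine map $x\mapsto 2x-1$, use the evenness of $h_t^{(1/2,\alpha)}$ to express the Cauchy--Stieltjes transform of the $x\mapsto x^2$-pushforward via $u_t(z)=zv_t(z^2)$ (equivalently your $\mathcal{S}_t(z)=\mathcal{H}_{t/2}(\sqrt z)/\sqrt z$), rescale time by $1/2$, and then compare with the pde obtained by the same atom-removal on the $(\alpha,\alpha)$ side. The only cosmetic difference is that the paper writes the substitution in the direction $u_t(z)=zv_t(z^2)$ rather than via $\sqrt z$, which makes the chain-rule step slightly cleaner but is otherwise identical to what you outline.
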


\begin{proof}
Recall from \cite{Dem}, Proposition 6, that $tG_t^{(1/2, \alpha)}$ satisfies the following pde: 
\begin{equation*}
\partial_tG_t^{(1/2, \alpha)}(z)= \frac{1}{2}\partial_z\left[(1-2\alpha)G_t^{(1/2, \alpha)}(z) +  z(z-1)[G_t^{(1/2, \alpha)}]^2(z) \right], \quad z \in \mathbb{C} \setminus [0,1].
\end{equation*}
Since $\alpha \geq 1/2$ then Theorem 1 in \cite{Ham0} shows (after normalization) that $\mu_{t}^{(1/2,\alpha)}$ has a zero weight at $x=0$ while it assigns the weight $2\alpha-1$ to $x=1$. 
Accordingly, we shall consider
\begin{align*}
\tilde{G}_t^{(1/2,\alpha)}(z) :=  \int_0^1\frac{1}{z-x} k_{t}^{(1/2,\alpha)}(x) dx
& = \frac{1}{2(1-\alpha)} \int_0^1\frac{1}{z-x}\left[\mu_{t}^{(1/2, \alpha)}(dx) - (2\alpha-1)\delta_1\right] 
\\& = \frac{1}{2(1-\alpha)} \left[G_t^{(1/2, \alpha)}(z) - \frac{(2\alpha-1)}{z-1}\right].
\end{align*}
Quick computations then shows that 
\begin{equation*}
\partial_t\tilde{G}_t^{(1/2, \alpha)}(z)= \partial_z\left[(2\alpha-1)\left(z-\frac{1}{2}\right)\tilde{G}_t^{(1/2, \alpha)}(z) +  (1-\alpha) z(z-1)[\tilde{G}_t^{(1/2, \alpha)}]^2(z) \right].
\end{equation*}
In turn, the Cauchy-Stieltjes transform of $h_{t}^{(1/2, \alpha)}$ is expressed through $\tilde{G}_t^{(1/2,\alpha)}$ as: 
\begin{equation*}
u_t^{(1/2,\alpha)}(z) :=  \int_0^1\frac{1}{z-x}h_{t}^{(1/2, \alpha)}(x) dx = \frac{1}{2}\tilde{G}_t^{(1/2,\alpha)}\left(\frac{z+1}{2}\right),
\end{equation*}  
so that 
\begin{equation*}
\partial_tu_t^{(1/2, \alpha)}(z)= \partial_z\left[(2\alpha-1)zu_t^{(1/2, \alpha)}(z) +  (1-\alpha) (z^2-1)[u_t^{(1/2, \alpha)}]^2(z) \right].
\end{equation*}
But since $h_{t}^{(1/2, \alpha)}$ is assumed to be an even function, then we may write 
\begin{equation*}
u_t^{(1/2, \alpha)}(z) =: zv_t^{(1/2, \alpha)}(z^2)
\end{equation*}
 where now $v_t^{(1/2, \alpha)}$ is the Cauchy-Stieltjes transform of the pushforward of $h_t^{(1/2, \alpha)}$ under the map $x \mapsto x^2$. To see this, it suffices to write 
 \begin{align*}
 u_t^{(1/2,\alpha)}(z) = \int \frac{1}{z-x}h_{t}^{(1/2, \alpha)}(x)dx &= \frac{1}{2}\int\left[\frac{1}{z-x} + \frac{1}{z+x}\right]h_{t}^{(1/2, \alpha)}(x) dx 
 \\&= z\int \frac{1}{z^2-x^2}h_{t}^{(1/2, \alpha)}(x)dx
 \\& = z \int\frac{1}{z^2-(2x-1)^2}k_{t}^{(1/2, \alpha)}(x)dx.
 \end{align*}
 
 As a matter of fact 
\begin{align*}
z \partial_tv_t^{(1/2, \alpha)}(z^2) &= \partial_z\left[(2\alpha-1)z^2v_t^{(1/2, \alpha)}(z^2) +  (1-\alpha) z^2(z^2-1)[v_t^{(1/2, \alpha)}]^2(z) \right] 
\\& = 2z\partial_z\left[(2\alpha-1)zv_t^{(1/2, \alpha)} +  (1-\alpha) z(z-1)[v_t^{(1/2, \alpha)}]^2 \right](z^2),
\end{align*}
or equivalently 
\begin{align*}
\partial_tv_t^{(1/2, \alpha)}(z) = 2\partial_z\left[(2\alpha-1)zv_t^{(1/2, \alpha)}(z) +  (1-\alpha) z(z-1)[v_t^{(1/2, \alpha)}(z)]^2 \right].
\end{align*}
Performing the time-change $t \mapsto t/2$, we get 
\begin{align}\label{PDE1}
\partial_tv_{t/2}^{(1/2, \alpha)}(z) = \partial_z\left[(2\alpha-1)zv_{t/2}^{(1/2, \alpha)}(z) +  (1-\alpha) z(z-1)[v_{t/2}^{(1/2, \alpha)}(z)]^2 \right].
\end{align}
Finally, the Cauchy-Stieltjes transform of $\mu_{t, P\mathscr{A}P}^{(\alpha, \alpha)}$ satisfies: 
\begin{equation*}
\partial_tG_t^{(\alpha, \alpha)}(z)= \partial_z\left[(1-2\alpha)zG_t^{(\alpha, \alpha)}(z) + \alpha z(z-1)[G_t^{(\alpha, \alpha)}]^2(z) \right], \quad z \in \mathbb{C} \setminus [0,1].
\end{equation*}
Removing the weight assigned by $\mu_{t, P\mathscr{A}P}^{(\alpha, \alpha)}$ to $x=1$ (the weight at $x=0$ vanishes), we are led to consider the Cauchy-Stieltjes distribution of its normalized density:
\begin{align*}
\tilde{G}_t^{(\alpha,\alpha)}(z) := \int_0^1\frac{1}{z-x} k_{t}^{(\alpha,\alpha)}(x) dx & = \frac{\alpha}{(1-\alpha)} \int_0^1\frac{1}{z-x}\left[\mu_{t}^{(\alpha, \alpha)}(dx) - \frac{(2\alpha-1)}{\alpha}\delta_1\right] 
\\&= \frac{\alpha}{1-\alpha} \left[G_t^{(\alpha, \alpha)}(z) - \frac{(2\alpha-1)}{\alpha(z-1)}\right].
\end{align*}
Straightforward computations then show that 
\begin{equation}\label{PDE2}
\partial_t\tilde{G}_t^{(\alpha,\alpha)}(z) = \partial_z[(2\alpha-1)z\tilde{G}_t^{(\alpha, \alpha)}(z) + (1-\alpha) z(z-1)[\tilde{G}_t^{(\alpha, \alpha)}]^2(z)],
\end{equation}
which is the same as \eqref{PDE1}. 
\end{proof}

The following lemma shows that the assumption made in Proposition \ref{Pr1} on the angle operators ensure that $h_{t}^{(1/2,\alpha)}$ is an even function.  
\begin{lem}
If $PQP$ and $({\bf 1}-P)Q({\bf 1}-P)$ have the same spectral distribution in the compressed probability space $(P\mathscr{A}P, 2\tau)$ then $h_{t}^{(1/2,\alpha)}$ is an even function. 
\end{lem}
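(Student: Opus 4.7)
The plan is to reinterpret the hypothesis as an invariance of the joint $*$-distribution of $(P, Q)$ under the substitution $P \mapsto {\bf 1}-P$, transport this invariance to the tuple $(P, Q, Y_t)$ via $*$-freeness, and then extract the density symmetry from the classical Halmos $2 \times 2$ decomposition of the pair $(P, Y_tQY_t^{\star})$. First, observe that the joint $*$-distribution of two projections $(P, Q)$ in $(\mathscr{A}, \tau)$ is determined by $\tau(P)$, $\tau(Q)$, and the numbers $\tau[(PQ)^n]$ for $n \geq 1$, since every word in $P, Q$ reduces via $P^2 = P$, $Q^2 = Q$ and cyclicity of $\tau$ to one of these scalars. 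Using the elementary identities $\tau[(PQP)^n] = \tau[(PQ)^n]$ and $\tau[(({\bf 1}-P)Q({\bf 1}-P))^n] = \tau[(({\bf 1}-P)Q)^n]$, the hypothesis rephrases as $\tau[(PQ)^n] = \tau[(({\bf 1}-P)Q)^n]$ for all $n \geq 1$. Combined with $\tau(P) = \tau({\bf 1}-P) = 1/2$, this yields that the joint $*$-distribution of $(P, Q)$ coincides with that of $({\bf 1}-P, Q)$ in $(\mathscr{A}, \tau)$.

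Next, the von Neumann algebras generated by $\{P, Q\}$ and by $\{{\bf 1}-P, Q\}$ coincide, so the $*$-freeness of $Y_t$ from this subalgebra combined with the universality of the free product construction entails that the $*$-distributions of $(P, Q, Y_t)$ and $({\bf 1}-P, Q, Y_t)$ also agree. Specializing to the word $(PY_tQY_t^{\star}P)^n$ yields
\begin{equation*}
\tau\bigl[(PY_tQY_t^{\star}P)^n\bigr] = \tau\bigl[(({\bf 1}-P)Y_tQY_t^{\star}({\bf 1}-P))^n\bigr], \quad n \geq 1.
\end{equation*}
Normalizing by $\tau(P) = \tau({\bf 1}-P) = 1/2$, this amounts to the equality of the spectral distributions of $PRP$ in $(P\mathscr{A}P, 2\tau)$ and of $({\bf 1}-P)R({\bf 1}-P)$ in $(({\bf 1}-P)\mathscr{A}({\bf 1}-P), 2\tau)$, where $R := Y_tQY_t^{\star}$.

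Finally, one applies the Halmos $2 \times 2$ decomposition to $(P, R)$: on the generic block of the algebra generated by $P$ and $R$, a standard isomorphism realizes $P$ as a rank-one diagonal projection and $R$ as a $2 \times 2$ matrix whose $(1,1)$-entry is some $c$ of spectrum in $(0, 1)$ and whose $(2,2)$-entry is $1-c$. In this representation, $PRP$ and $({\bf 1}-P)R({\bf 1}-P)$ correspond respectively to $c$ and $1-c$, so the continuous parts of their compressed spectral distributions are precisely the laws of $c$ and of $1-c$. Their coincidence, as obtained in the previous step, forces the distribution of $c$ to be symmetric around $1/2$. Since the continuous part of $\mu_t^{(1/2,\alpha)}$ on $(0,1)$ is precisely this distribution of $c$, the density $f_t$ of $\mu_t^{(1/2,\alpha)}$ satisfies $f_t(x) = f_t(1-x)$ on $(0,1)$, which after normalization and pushforward by $x \mapsto 2x-1$ is exactly the evenness of $h_t^{(1/2,\alpha)}$.

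The main technical point will be the transport step: lifting the equality of joint distributions from $(P, Q)$ to $(P, Q, Y_t)$ via free product universality, carefully accounting for the non-trivial $*$-distribution of $Y_t$. Once this is in place, the conversion of the moment identity into the symmetry of $f_t$ via the Halmos decomposition of $(P, Y_tQY_t^{\star})$ is essentially routine.
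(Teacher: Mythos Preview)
Your argument is correct and takes a genuinely different route from the paper. The paper works through the associated orthogonal symmetries $R=2P-{\bf 1}$ and $S=2Q-{\bf 1}$: it invokes the moment formula from \cite{Dem-Ham} to show that the hypothesis is equivalent to $RS$ and $-RS$ having the same spectral distribution, then uses the explicit density formula from \cite{Ham0} expressing the density of $\mu_t^{(1/2,\alpha)}$ through the density $\kappa_t$ of the unitary $RY_tSY_t^{\star}$, and finally appeals to \cite{Ham1} to propagate the vanishing of odd moments from $RS$ to $RY_tSY_t^{\star}$, yielding $\kappa_t(-w)=\kappa_t(w)$. Your approach bypasses all three external inputs: you recast the hypothesis as invariance of the joint law of $(P,Q)$ under $P\mapsto {\bf 1}-P$, transport this invariance to $(P,Q,Y_t)$ by the universality of the free product (this is the exact analogue of the paper's use of \cite{Ham1}, but argued abstractly), and then read off the symmetry $x\mapsto 1-x$ of the continuous part of $\mu_t^{(1/2,\alpha)}$ directly from the Halmos $2\times 2$ model for the pair $(P,Y_tQY_t^{\star})$. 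Your proof is more self-contained and makes transparent that the evenness of $h_t^{(1/2,\alpha)}$ is nothing but the $P\leftrightarrow {\bf 1}-P$ symmetry; the paper's proof, by contrast, situates the result within its running theme of relating the free Jacobi process to the spectral theory of $RY_tSY_t^{\star}$ on the circle. One small point worth making explicit in your write-up: the identification of the continuous part of $\mu_t^{(1/2,\alpha)}$ with an absolutely continuous measure (so that one can speak of its density) still relies on the Lebesgue decomposition established in \cite{Ham0}, which the paper also uses.
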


\begin{proof}
Let 
\begin{equation*}
R := 2P-{\bf 1}, \quad S := 2Q-{\bf 1},
\end{equation*}
 be the orthogonal symmetries associated to $P$ and $Q$ respectively. Then both operators are unitary and $\tau(R) = 0$. From \cite{Dem-Ham}, we readily see that the moments of $PQP$ 
are encoded only by those of $RS$ and by $\tau(S)$: for any $j \geq 0$,
\begin{equation*}
\tau[(PQP)^j] = \frac{1}{2^{2j+1}}\binom{2j}{j} + \frac{\tau(S)}{4} + \frac{1}{2^{2j}} \sum_{k=1}^j\binom{2j}{j-k} \tau[(RS)^k]. 
\end{equation*}
Since ${\bf 1}-P = ({\bf 1}-R)/2$ then we deduce that $PQP$ and $({\bf 1}-P)Q({\bf 1}-P)$ have the spectral distribution if and only if the unitary operators $RS$ and $-RS$ do so (i.e. $RS$ is an even operator). 

Now, recall from Theorem 1.1 in \cite{Ham0} that the density of $\mu_{t}^{(1/2, \alpha)}(dx)$ is given by: 
\begin{equation*}
\frac{\kappa_t(e^{i2\arccos(\sqrt{x})})}{2\pi \sqrt{x(1-x)}}, \quad x \in (0,1),
\end{equation*}
where $\kappa_t$ is the density of the spectral distribution of the unitary operator $RY_tSY_t^{\star}$. Performing the variable change $u = 2x-1$ transforms this density into 
\begin{equation*}
\frac{\kappa_t(e^{i2\arccos(\sqrt{(1+u)/2})})}{\pi \sqrt{(1-u^2)}}, \quad x \in (0,1),
\end{equation*}
and it remains to show that the latter is an even function in the variable $u$. To this end, we use the relation 
\begin{equation*}
2\arccos(\sqrt{(1-u)/2}) = \pi - 2\arccos(\sqrt{(1+u)/2}),
\end{equation*}
and need to prove that $\kappa_t(-w) = \kappa(w), |w| = 1$. But the last property follows from Proposition 2.1 in \cite{Ham1} which shows that the odd moments of $RY_tSY_t^{\star}$ vanish since those of $RS$ do by assumption..
\end{proof}

With the help of this lemma, the statement of Proposition \ref{Pro1} holds true whence Proposition \ref{Pr1} follows. 
 
\begin{rem}
The assumption that the unitary operators $RS$ and $-RS$ share the same spectral distribution is readily satisfied when $R$ and $S$ are free. We do not know how to construct a concrete example going beyond freeness. 
Nonetheless, we can exhibit probability distributions on the unit circle which are invariant under complex conjugation $w \mapsto \overline{w}$ and under the sign flip $w \mapsto -w$. 
\end{rem}

\section{Matching Initial data: going beyond freeness}
Even though the pdes \eqref{PDE1} and \eqref{PDE2} are the same, we can not infer that their solutions coincide unless their initial data at $t=0$ agree. At the level of moments, the agreement of initial data is equivalent to
\begin{equation}\label{Equa3}
2\tau[(2PQP-P)^{2j}] = \tau((Q_1+Q_2 - {\bf 1})^{2j}),  \quad j \geq 0,
\end{equation}
where $Q_1$ and $Q_2$ are orthogonal projections in $\mathscr{A}$ with common rank $\alpha = \tau(Q)$. To see this, recall from the proof of Proposition \ref{Pro1} that for any $t > 0$, $v_{t/2}^{(1/2,\alpha)}$ is the Cauchy-Stieltjes transform of the pushforward of the density $f_t^{(1/2, \alpha)}$ 
under the map $x \mapsto (2x-1)^2$. Since 
\begin{equation*}
f_t^{(1/2, \alpha)}(x) dx = \frac{1}{2(1-\alpha)}\left[\mu_{t,P\mathscr{A}P}^{(1/2, \alpha)}(dx)-(2\alpha-1)\delta_1\right], 
\end{equation*}
then $v_{t/2}^{(1/2,\alpha)}$ may be expanded as 
\begin{equation*}
v_{t/2}^{(1/2,\alpha)}(z) = \frac{1}{2(1-\alpha)}\sum_{j = 0}^{\infty}\frac{1}{z^{j+1}} \left[2\tau[(2PY_{t/2}QY_{t/2}^{\star}P-P)^{2j}] - (2\alpha-1)\right], \quad t \geq 0.
\end{equation*}
Similarly, for any $t \geq 0$, the expansion 
\begin{align*}
\tilde{G}_t^{(\alpha,\alpha)}(z) & = \frac{\alpha}{(1-\alpha)} \int_0^1\frac{1}{z-x}\left[\mu_t^{(\alpha, \alpha)}(dx) - \frac{(2\alpha-1)}{\alpha}\delta_1\right] 
\\& = \frac{\alpha}{1-\alpha}\sum_{j=0}^{\infty}\frac{1}{z^{j+1}}\left[\frac{\tau[(Q_1Y_tQ_2Y_t^{\star}Q_1)^{j}]}{\alpha} - \frac{(2\alpha-1)}{\alpha}\right],
\end{align*}
holds. Consequently, the equality $v_{0}^{(1/2,\alpha)}(z) = \tilde{G}_0^{(\alpha,\alpha)}(z)$ is equivalent to 
\begin{equation}\label{Equa1}
\tau[(2PQP-P)^{2j}] - \frac{(2\alpha-1)}{2} = \tau[(Q_1Q_2Q_1)^{j}] - (2\alpha-1), 
\end{equation}
for any $j \geq 0$. Besides, Theorem 1 in \cite{Dem-Ham} yields again
\begin{equation*}
 \tau[(Q_1Q_2Q_1)^{j}] - \frac{(2\alpha-1)}{2} = \frac{1}{2}\tau(Q_1+Q_2 - {\bf 1})^{2j}),
 \end{equation*}
 so that \eqref{Equa1} is equivalent to \eqref{Equa3}. 

When $P$ and $Q$ are free in $(\mathscr{A}, \tau)$, \eqref{Equa3} follows from the known property that the compression of $Q$ by $P$ has the same distribution in $(P\mathscr{A}P, 2\tau)$ as $(Q_1+Q_2)/2$ in $(\mathscr{A}, \tau)$. 
More generally, we do not know whether this property would imply freeness of $P$ and $Q$ at least in the special case $\tau(P) = 1/2$.

\end{document}